\newtheorem{theorem}{Theorem}[section]
\newtheorem{corollary}[theorem]{Corollary}
\newtheorem{proposition}[theorem]{Proposition}
\newtheorem{remark}[theorem]{Remark}
\newcommand\sa{\smallskipamount}
\newcommand\sPP{\\[\sa]\indent}
\begin{document}
\title{An elementary representation of the higher-order Jacobi-type differential equation}
\author{Clemens Markett}
\date{}
\maketitle

\numberwithin{equation}{section}
\numberwithin{theorem}{section}
\begin{abstract}
We investigate the differential equation for the Jacobi-type polynomials which are orthogonal on the interval $[-1,1]$ with respect to the classical Jacobi measure and an additional point mass at one endpoint. This scale of higher-order equations was introduced by J. and R. Koekoek in 1999 essentially by using special function methods. In this paper, a completely elementary representation of the Jacobi-type differential operator of any even order is given. This enables us to trace the orthogonality relation of the Jacobi-type polynomials back to their differential equation. Moreover, we establish a new factorization of the Jacobi-type operator which gives rise to a recurrence relation with respect to the order of the equation. \\ 
\\
Key words: orthogonal polynomials, higher-order linear differential equations, Jacobi-type equations, Jacobi-type polynomials, factorization.\\
\\
2010 Mathematics Subject Classification: 33C47, 34B30, 34L10
\end{abstract}
\section{Introduction and main result}
\label{intro}
In 1999, J. and R. Koekoek \cite{Koe2} established a new class of higher-order linear differential equations satisfied by the “generalized” Jacobi polynomials  $\{P_{n}^{\alpha, \beta, M, N}(x)\} _{n=0}^{\infty},\;\alpha,\,\beta >-1,\;M,\,N \ge 0$. These function systems were introduced and studied by T. H. Koornwinder \cite{Ko} as the orthogonal polynomials with respect to a linear combination of the Jacobi weight function $w_{\alpha,\beta}$  and one or two delta “functions” at the endpoints of the interval $-1 \le x \le 1$,
\begin{equation}
\begin{aligned}
&w_{\alpha,\beta,M,N}(x)=w_{\alpha,\beta}(x)+M\delta(x+1)+N\delta(x-1),\\ 
&w_{\alpha,\beta}(x)=h_{\alpha,\beta}^{-1}(1-x)^{\alpha}(1+x)^{\beta},\\
&h_{\alpha,\beta}=\int_{-1}^{1}(1-x)^{\alpha}(1+x)^{\beta}dx=2^{\alpha+\beta+1}\Gamma(\alpha+1)\Gamma(\beta+1)/\Gamma(\alpha+\beta+2).
 \label{eq1.1}
\end{aligned}
\end{equation}

In the present paper we investigate the so-called Jacobi-type equation with one additional mass point in the weight function, i. e. with either $M$ or $N$  being positive. In terms of the classical Jacobi polynomials \cite [Sec. 10.8]{HTF2}
\begin{equation}
 P_n^{\alpha,\beta}(x)=\frac{(\alpha+1)_n}{n!}
 {}_2F_1(-n,n+\alpha+\beta+1;\alpha+1;\frac{1-x}{2})
 \label{eq1.2}
 \end{equation}
for $n\in\mathbb{N}_{0}=\lbrace 0, 1, \cdots \rbrace$, the Jacobi-type polynomials are given by, cf. \cite{Koe2},\cite{Ko},
\begin{equation}
P_n^{\alpha,\beta,M,N}(x)=P_n^{\alpha,\beta}(x)+MQ_n^{\alpha,\beta}(x)
+NR_n^{\alpha,\beta}(x),\; n\in\mathbb{N}_{0},\;M\cdot N=0,
 \label{eq1.3}
\end{equation}   
where, provided that $A_{n}^{\alpha,\beta}= (\alpha+2)_{n-1} (\alpha+\beta+2)_{n}/\lbrack 2n!(\beta+1)_{n-1}\rbrack$,
\begin{equation}
 Q_n^{\alpha,\beta}(x)=A_n^{\beta,\alpha}(x+1)P_{n-1}^{\alpha,\beta+2}(x),\; n\in\mathbb{N},\; Q_0^{\alpha,\beta}(x)=0,
 \label{eq1.4}
 \end{equation}	 
\begin{equation}
R_n^{\alpha,\beta}(x)=A_n^{\alpha,\beta}(x-1)P_{n-1}^{\alpha+2,\beta}(x),\; n\in\mathbb{N},\; R_0^{\alpha,\beta}(x)=0.
 \label{eq1.5}
 \end{equation}	  
 By means of the well-known relationship $P_n^{\alpha,\beta}(x)=(-1)^{n}P_n^{\beta,\alpha}(-x)$, it follows that
\begin{equation}
 Q_n^{\alpha,\beta}(x)=(-1)^{n}R_n^{\beta,\alpha}(-x),
 \;P_n^{\alpha,\beta,M,0}(x)=(-1)^{n}P_n^{\beta,\alpha,0,M}(-x),\;n\in\mathbb{N}_{0}, M>0.
 \label{eq1.6}
 \end{equation}	 
 Hence it suffices to treat, for instance, the case $M=0, N>0$ in full detail. The corresponding results for $M>0, N=0$  then follow immediately. 
 
 For $\alpha\in\mathbb{N}_{0}$ and any $\beta>-1,\,N>0$, J. and R. Koekoek \cite{Koe2} found that the Jacobi-type polynomials $\{P_{n}^{\alpha, \beta,0,N}(x)\}_{n=0}^{\infty}$ satisfy a linear differential equation of order $2\alpha+4$ which, for our purpose, is conveniently described in the form   
 \begin{equation}
  N\{L_{2\alpha+4,x}^{\alpha,\beta}-\Lambda_{2\alpha+4,n}^{\alpha,\beta}\}y(x)
 +C_{\alpha,\beta}\{L_{2,x}^{\alpha,\beta}-\Lambda_{2,n}^{\alpha,\beta}\}y(x)=0,\;-1<x<1.
  \label{eq1.7}
  \end{equation}	
 The crucial part of this equation consists in the higher-order differential expression 
 \begin{equation}
  L_{2\alpha+4,x}^{\alpha,\beta}y(x)= \sum_{i=1}^{2\alpha+4}d_i^{\alpha,\beta}(x)D_x^i y(x)
      \label{eq1.8}
   \end{equation}	
with coefficient functions involving, among others, a generalized hypergeometric sum, 
 \begin{equation}
 \begin{aligned}
d_i^{\alpha,\beta}(x)=-(\alpha+2)!(\beta+1)_{\alpha+2}&\sum_{k=\max(0,\,i-\alpha-3)}^{i-1}
\frac{(-2)^{i}(\alpha+3)_{i-1-k}(-\alpha-2)_{i-1-k}}{(\beta+1)_{i-1-k}(i-k)!
(i-1-k)!k!}\cdot\\
&\cdot {}_3F_2
\left(\begin{matrix}-k,\alpha+\beta+3,\alpha+i+2-k\\
\beta+i-k,i+1-k \end{matrix};1\right)
\left(\frac{x-1}{2}\right)^{k+1}.
\label{eq1.9}
\end{aligned}
  \end{equation}
Throughout this paper, $D_x^i\equiv (D_x)^i$ denotes the $i$-fold differentiation with respect to $x$. Notice that the highest coefficient function in the sum $(\ref{eq1.8})$ simplifies to $d_{2\alpha+4}^{\alpha,\beta}(x)=(x^2-1)^{\alpha+2}$, see $(\ref{eq2.4})$ below. Furthermore, the eigenvalue parameters and the coupling constant read
\begin{equation}
  \Lambda_{2\alpha+4,n}^{\alpha,\beta}=(n)_{\alpha+2}(n+\beta)_{\alpha+2},\; \Lambda_{2,n}^{\alpha,\beta}=n(n+\alpha+\beta+1),\;C_{\alpha,\beta}=(\alpha+2)!(\beta+1)_{\alpha+1}
      \label{eq1.10}
  \end{equation}
In particular, when $N$ tends to zero, equaton $(\ref{eq1.7})$ reduces to the classical equation for the Jacobi polynomials $P_n^{\alpha,\beta}(x)$, based on the second-order differential operator
 \begin{equation}
 \begin{aligned}
  L_{2,x}^{\alpha,\beta}y(x)&=\lbrace(x^2-1)D_x^2+\left[\alpha-\beta+(\alpha+\beta+2)x\right]D_x\rbrace y(x)\\
  &=(x-1)^{-\alpha}(x+1)^{-\beta}D_x\lbrack(x-1)^{\alpha+1}(x+1)^{\beta+1}D_xy(x)
  \rbrack.
    \label{eq1.11}
  \end{aligned}
  \end{equation}	

For the lowest parameter value $\alpha=0$, the Jacobi-type equation (\ref{eq1.7}) belongs to the very few fourth-order differential equations with polynomial solutions, which were discovered by H. L. Krall \cite{Kr2} in 1940 and further investigated by A. M. Krall and L. L. Littlejohn \cite{Kr}, \cite{KrL}. To our knowledge, it was also Littlejohn who explicitly determined the sixth- and eighth-order Jacobi-type equations for $\alpha=1$ and $\alpha=2$, both in normal and symmetric form. Later and more generally, Kwon, Littlejohn, and Yoon \cite{KLY} characterized all orthogonal polynomial systems satisfying a finite order differential equation of spectral type and named them BKOPS after Bochner and Krall. Zhedanov \cite{Z} stated some necessary conditions for the polynomials $P_{n}^{\alpha, \beta, M, N}(x)$ to belong to this class. In addition he gave a representation of the Jacobi-type differential operator in a form similar to (\ref{eq1.8}), (\ref{eq1.9}).

Almost simultaneously and quite differently to \cite{Koe2}, Bavinck \cite{Ba} used some operator theoretical arguments to present the Jacobi-type differential operator in the “factorized” form 
\begin{equation}
 L_{2\alpha+4,x}^{\alpha,\beta}y(x)=
\prod_{j=0}^{\alpha+1}\lbrace L_{2,x}^{\alpha,\beta}-\frac{2(\alpha+1)}{x-1}+j(\alpha+\beta+1-j)\rbrace y(x).
\label{eq1.12}
 \end{equation} 
   
The main purpose of this paper is to establish the following elementary representation of the higher-order differential expression in equation (\ref{eq1.7}). 
\begin{theorem}
\label{thm1.1}
For any $\alpha\in\mathbb{N}_0,\;\beta>-1$, and for any sufficiently smooth function $y(x)$, 
 \begin{equation}
 L_{2\alpha+4,x}^{\alpha,\beta}y(x)=\frac{x-1}{(x+1)^\beta}D_x^{\alpha+2}\big   \lbrace(x+1)^{\alpha+\beta+2}D_x^{\alpha+2}\lbrack(x-1)^{\alpha+1}y(x)\rbrack\big\rbrace,\;-1<x<1.
 \label{eq1.13}
 \end{equation}
Consequently, the Jacobi-type polynomials $P_{n}^{\alpha, \beta, 0, N}(x), n\in\mathbb{N}_0,$ arise as  eigensolutions of the equation
\begin{equation}
\begin{aligned}
  N&(x-1)^{\alpha+1}D_x^{\alpha+2}\big\lbrace(x+1)^{\alpha+\beta+2}D_x^{\alpha+2}
  \lbrack(x-1)^{\alpha+1}y_n(x)\rbrack\big\rbrace\\
 &+C_{\alpha,\beta}D_x\lbrack(x-1)^{\alpha+1}(x+1)^{\beta+1}D_xy(x)\rbrack
 =\Lambda_{2\alpha+4,n}^{\alpha,\beta,N}(x-1)^{\alpha}(x+1)^{\beta}y_n(x)
       \label{eq1.14}
   \end{aligned}
  \end{equation}	
 \end{theorem}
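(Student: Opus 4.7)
My strategy is to establish the elementary representation (\ref{eq1.13}) by expanding its right-hand side via repeated applications of the Leibniz rule, and to match the resulting polynomial coefficients of $D_x^i y$ with the explicit coefficients $d_i^{\alpha,\beta}(x)$ given in (\ref{eq1.9}). The second claim (\ref{eq1.14}) will then follow by substitution into (\ref{eq1.7}).

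First, I denote the right-hand side of (\ref{eq1.13}) by $\tilde L^{\alpha,\beta}y(x)$ and compute it in three nested Leibniz expansions. Since $(x-1)^{\alpha+1}$ is a polynomial of degree $\alpha+1$, the innermost differentiation collapses to
\begin{equation*}
D_x^{\alpha+2}\bigl[(x-1)^{\alpha+1}y(x)\bigr] = \sum_{m=0}^{\alpha+1}\binom{\alpha+2}{m+1}\frac{(\alpha+1)!}{m!}(x-1)^m D_x^{m+1}y(x).
\end{equation*}
Multiplying by $(x+1)^{\alpha+\beta+2}$ and applying $D_x^{\alpha+2}$ once more, the outer Leibniz distributes the $\alpha+2$ derivatives between the $(x+1)^{\alpha+\beta+2}$ factor---which contributes a falling-factorial multiple of $(x+1)^{\beta+\ell}$ for a split index $\ell$---and the inner bracket, whose expansion requires yet another Leibniz split of $(x-1)^m$ and $D_x^{m+1}y$. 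Finally, multiplication by $(x-1)/(x+1)^\beta$ cancels the residual $\beta$-power and supplies an $(x-1)$ prefactor. After relabeling, this exhibits $\tilde L^{\alpha,\beta}y(x)=\sum_{i=1}^{2\alpha+4}\tilde d_i^{\alpha,\beta}(x)D_x^i y(x)$, in which each $\tilde d_i^{\alpha,\beta}(x)$ is a double sum of monomials $(x-1)^{i-\ell}(x+1)^\ell$ with explicit Pochhammer coefficients.

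Second, I convert the mixed monomials to pure powers of $x-1$ by means of $(x+1)^\ell=\sum_{j=0}^{\ell}\binom{\ell}{j}2^{\ell-j}(x-1)^j$ and collect the coefficient of $(x-1)^{k+1}$. Interchanging summations and rewriting binomial-factorial products as Pochhammer symbols, the surviving inner sum should telescope to the terminating ${}_3F_2$ with parameters $(-k,\alpha+\beta+3,\alpha+i+2-k;\beta+i-k,i+1-k;1)$ from (\ref{eq1.9}). The lower summation bound $k=\max(0,i-\alpha-3)$ arises automatically from the vanishing of $(-\alpha-2)_{i-1-k}$ when $i-1-k>\alpha+2$. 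Verifying that the overall prefactor combines to $-(\alpha+2)!(\beta+1)_{\alpha+2}$ times the quotient displayed in (\ref{eq1.9}) completes the identification $\tilde d_i^{\alpha,\beta}=d_i^{\alpha,\beta}$.

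The main technical obstacle lies in this hypergeometric identification: the triple Leibniz expansion naturally delivers sums whose reorganization into the specific ${}_3F_2$ structure of (\ref{eq1.9}) demands careful Pochhammer bookkeeping, in particular reconciling the $(\beta+1)_{\alpha+2}$ and $(\alpha+\beta+3)$ factors produced by the $(x+1)^{\alpha+\beta+2}$ differentiations with the $(\beta+1)_{i-1-k}$ denominator in (\ref{eq1.9}). Once (\ref{eq1.13}) is settled, equation (\ref{eq1.14}) follows by substituting (\ref{eq1.13}) and the Sturm-Liouville form (\ref{eq1.11}) into (\ref{eq1.7}), clearing the common factor $(x-1)^\alpha(x+1)^\beta$ on both sides, and absorbing the two eigenvalues into a single constant $\Lambda_{2\alpha+4,n}^{\alpha,\beta,N}=N\Lambda_{2\alpha+4,n}^{\alpha,\beta}+C_{\alpha,\beta}\Lambda_{2,n}^{\alpha,\beta}$.
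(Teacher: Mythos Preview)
Your plan is essentially the paper's own approach: expand the right-hand side of (\ref{eq1.13}) by iterated Leibniz, collect the coefficient of $D_x^iy$, and match it against $d_i^{\alpha,\beta}(x)$ from (\ref{eq1.9}). The Leibniz organisation you sketch (inner split on $(x-1)^{\alpha+1}$, then on $(x+1)^{\alpha+\beta+2}$, then on the remaining $(x-1)^{s-1}$ factor) is exactly what the paper does to arrive at its intermediate formula (\ref{eq2.3}).

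Where your outline is thinner than the paper is at the step you yourself flag as the main obstacle. You propose to recognise the ${}_3F_2$ of (\ref{eq1.9}) directly inside the triple Leibniz sum after converting $(x+1)^\ell$ to powers of $x-1$. The paper proceeds in the opposite direction, and this is the one concrete idea your plan is missing: it first \emph{simplifies} the ${}_3F_2$ in (\ref{eq1.9}) by applying Vandermonde's formula ${}_2F_1(-n,p;q;1)=(q-p)_n/(q)_n$ twice, reducing $d_i^{\alpha,\beta}(x)$ to the elementary double sum (\ref{eq2.1}) with no hypergeometric function left. Only then does it compare with the Leibniz expansion, and at that point the matching is a routine index shift. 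Attempting the identification in your direction---building the ${}_3F_2$ out of the Leibniz coefficients---is formally equivalent but in practice much harder to execute cleanly; the Vandermonde reduction is what makes the bookkeeping tractable. Your derivation of (\ref{eq1.14}) from (\ref{eq1.7}) is correct as stated.
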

 with combined eigenvalue parameter 
 \begin{equation}
  \Lambda_{2\alpha+4,n}^{\alpha,\beta,N}=\lbrack N(n+1)_{\alpha+1}(n+\beta)_{\alpha+1}
  +(2)_{\alpha+1}(\beta+1)_{\alpha+1}\rbrack\; n(n+\alpha+\beta+1).
   \label{eq1.15}
   \end{equation}	
   \linebreak[0]   
   \begin{corollary}
   \label{cor1.2}
   For $\beta\in\mathbb{N}_0$ and any $\alpha>-1,\;M>0$, the Jacobi-type orthogonal polynomials
   \begin{equation*}
   y_n(x)= P_n^{\alpha,\beta,M,0}(x)=P_n^{\alpha,\beta}(x)+MQ_n^{\alpha,\beta}(x), \; n\in\mathbb{N}_{0}, 
       \end{equation*}	
   satisfy the linear differential equation of order $2\beta+4$,
    \begin{equation}
     M\lbrace\widetilde{L}_{2\beta+4,x}^{\beta,\alpha}
     -\Lambda_{2\beta+4,n}^{\beta,\alpha}\rbrace y_n(x)
    +C_{\beta,\alpha}\lbrace L_{2,x}^{\alpha,\beta}-\Lambda_{2,n}^{\alpha,\beta}\rbrace y_n(x)=0,\;-1<x<1,
      \label{eq1.16}
     \end{equation}	   
      \begin{equation}
    \widetilde{L}_{2\beta+4,x}^{\beta,\alpha}y_n(x)=
    \frac{x+1}{(x-1)^\alpha}D_x^{\beta+2}\big\lbrace(x-1)^{\alpha+\beta+2}D_x^{\beta+2}
    \lbrack(x+1)^{\beta+1}y_n(x)\rbrack\big\rbrace.
    \label{eq1.17}
    \end{equation}
      \end{corollary}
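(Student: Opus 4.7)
The plan is to derive Corollary \ref{cor1.2} directly from Theorem \ref{thm1.1} via the reflection symmetry \eqref{eq1.6}. First I apply Theorem \ref{thm1.1} with $\alpha$ and $\beta$ interchanged and $N$ replaced by $M$; this asserts that $y(x) := P_n^{\beta,\alpha,0,M}(x)$ satisfies the equation \eqref{eq1.7} with swapped parameters, where the higher-order operator $L_{2\beta+4,x}^{\beta,\alpha}$ admits the elementary form \eqref{eq1.13} (again with $\alpha,\beta$ swapped). Since \eqref{eq1.6} gives $P_n^{\alpha,\beta,M,0}(u) = (-1)^n y(-u)$, the proof reduces to substituting $x = -u$ in the equation for $y$ and matching the resulting operators with those in \eqref{eq1.16}--\eqref{eq1.17}.

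For the second-order piece, a direct chain-rule calculation on the Sturm-Liouville form \eqref{eq1.11} yields $L_{2,x}^{\beta,\alpha}y(x)|_{x=-u} = L_{2,u}^{\alpha,\beta}\tilde{y}(u)$ with $\tilde{y}(u) := y(-u)$. For the higher-order piece I will track the signs produced by the substitution $x = -u$ inside \eqref{eq1.13}. The chain rule gives $D_x^k = (-1)^k D_u^k$, so the two blocks $D_x^{\beta+2}$ contribute an even sign and cancel; the innermost factor $(x-1)^{\beta+1}$ contributes $(-1)^{\beta+1}$; the outermost factor $(x-1)/(x+1)^\alpha$ contributes a further minus; and the middle factor $(x+1)^{\alpha+\beta+2}$ becomes $(1-u)^{\alpha+\beta+2}$ with no sign. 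Collecting everything, $L_{2\beta+4,x}^{\beta,\alpha}y(x)|_{x=-u}$ equals $(-1)^\beta$ times the analogue of the right-hand side of \eqref{eq1.17} in which $(x-1)^\alpha$ and $(x-1)^{\alpha+\beta+2}$ are replaced by $(1-u)^\alpha$ and $(1-u)^{\alpha+\beta+2}$ and $y_n$ by $\tilde{y}$.

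To finish, I need the identity
\[
(u-1)^{-\alpha}D_u^{\beta+2}\bigl[(u-1)^{\alpha+\beta+2}\phi(u)\bigr] = (-1)^\beta(1-u)^{-\alpha}D_u^{\beta+2}\bigl[(1-u)^{\alpha+\beta+2}\phi(u)\bigr],
\]
valid for smooth $\phi$. A one-line Leibniz expansion reduces each side to a polynomial-coefficient differential operator in $u$ whose terms match up to the overall $(-1)^\beta$; as a bonus, this passage removes the branch ambiguity of the non-integer exponent $\alpha$, since only integer powers of $(u\pm 1)$ survive. The extra $(-1)^\beta$ cancels the one collected earlier, giving $L_{2\beta+4,x}^{\beta,\alpha}y(x)|_{x=-u} = \widetilde{L}_{2\beta+4,u}^{\beta,\alpha}\tilde{y}(u)$. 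With $\tilde{y} = (-1)^n z_n$, where $z_n := P_n^{\alpha,\beta,M,0}$, linearity factors $(-1)^n$ out of every term of the transported equation \eqref{eq1.7}, and dividing by it yields \eqref{eq1.16}. The eigenvalues need no adjustment since $\Lambda_{2,n}^{\beta,\alpha} = \Lambda_{2,n}^{\alpha,\beta}$ is manifest from \eqref{eq1.10}, and $\Lambda_{2\beta+4,n}^{\beta,\alpha}$ is unchanged by the substitution. The main technical obstacle in this whole program is precisely the sign bookkeeping; the Leibniz expansion above is what keeps it painless.
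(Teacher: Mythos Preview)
Your proposal is correct and follows essentially the same route as the paper's proof: swap $\alpha\leftrightarrow\beta$ in Theorem~\ref{thm1.1}/equation~\eqref{eq1.7}, then substitute $x\mapsto -x$ and invoke the reflection relation~\eqref{eq1.6}. The paper records only the three resulting operator identities without spelling out the sign bookkeeping, whereas you carry it through explicitly; your Leibniz-expansion remark that only integer powers of $(u-1)$ survive---so the expression~\eqref{eq1.17} is unambiguous for non-integer $\alpha$---is a detail the paper leaves implicit.
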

 \begin{proof} 
 According to the relations (\ref{eq1.6}), we interchange the parameters $\alpha$  and $\beta$ in equation (\ref{eq1.7}) and substitute $x=-\xi$ to obtain 
  \begin{equation*}
  L_{2\beta+4,-\xi}^{\beta,\alpha}y_n(-\xi)=
  \widetilde{L}_{2\beta+4,\xi}^{\beta,\alpha}y_n(\xi),\;
  L_{2,-\xi}^{\beta,\alpha}y_n(-\xi)=
  L_{2,\xi}^{\alpha,\beta}y_n(\xi),\;     
  \Lambda_{2,n}^{\beta,\alpha}=\Lambda_{2,n}^{\alpha,\beta}.
   \end{equation*}	   
         	 \end{proof} 
   
 The proof of Theorem \ref{thm1.1} is carried out in Section \ref{sec:2} by verifying the equivalence of the two representations (\ref{eq1.13}) and (\ref{eq1.8}), (\ref{eq1.9}). Another, direct proof is postponed to Section  \ref{sec:5}. 
   
 In addition to gaining some deeper insight into the nature of the Jacobi-type equation, the new results reveal a number of nice properties that make them accessible for wider applications. In Section \ref{sec:3} we show that the Jacobi-type differential operator is symmetric with respect to the scalar product associated with the weight function $w_{\alpha,\beta,0,N}(x)$. This may lay foundation to a spectral theoretical treatment of the Jacobi-type equation (\ref{eq1.7}). In particular, it follows that its polynomial solutions are mutually orthogonal in the respective space. Another interesting feature to be discussed in Section \ref{sec:4} is a new factorization of the differential operator $L_{2\alpha+4,x}^{\alpha,\beta}$ into a product of $\alpha+2$  linear second-order differential expressions, which is distinct from Bavinck's (\ref{eq1.12}). This leads to a significant recurrence relation with respect to the order of the differential equation. Finally, in Section \ref{sec:6}, we show how certain Jacobi-type equations are related to the equation for the symmetric ultraspherical-type polynomials  $P_{n}^{\alpha,\alpha, N, N}(x),\,n\in\mathbb{N}_0,\,\alpha\in\mathbb{N}_0,\,N >0$. Originally due to R. Koekoek \cite{Koe1}, this equation was recently stated by the author \cite{Ma} in a form analogous to (\ref{eq1.7}) and (\ref{eq1.13}), i. e.  
 \begin{equation}
  N\{L_{2\alpha+4,x}-\Lambda_{2\alpha+4,n}\}y(x)
 +C_{\alpha}\{L_{2,x}^{\alpha,\alpha}-\Lambda_{2,n}^{\alpha,\alpha}\}y(x)=0,
 \;-1<x<1.
   \label{eq1.18}
  \end{equation}	
The constant linking the two terms is given here by $C_{\alpha}=\frac{1}{2}(\alpha+2)(2\alpha+2)!$, while
 \begin{equation}
 \begin{aligned}
  &L_{2\alpha+4,x}y(x)=(x^2-1)D_x^{2\alpha+4}\lbrack(x^2-1)^{\alpha+1}y(x)\rbrack,\;
  \Lambda_{2\alpha+4,n}=(n-1)_{2\alpha+4}\\
  &L_{2,x}^{\alpha,\alpha}=\lbrace(x^2-1)D_x^2+2(\alpha+1)x\,D_x\rbrace y(x),\;\Lambda_{2,n}^{\alpha,\alpha}=n(n+2\alpha+1).
    \label{eq1.19}
  \end{aligned}
  \end{equation}
  \section{Proof of Theorem 1.1}
\label{sec:2}

Proceeding from the sum (\ref{eq1.8}) defining  $L_{2\alpha+4,x}^{\alpha,\beta}$, we rewrite the coefficient functions  $d_i^{\alpha,\beta}(x)$, $1\le i \le \alpha+2$, as follows. Firstly, a repeated use of Vandermonde's summation formula
 \begin{equation*}
{}_2F_1(-n,p,q;1)=(q-p)_n/(q)_n,\; q>0,\; n\in\mathbb{N}_0,
 \end{equation*}
 yields 
 \begin{equation*}
 \begin{aligned}
  {}_3F_2
  \left(\begin{matrix}-k,b,p\\
  c,q \end{matrix};1\right)
    &=\sum_{m=0}^k 
   \frac{(-k)_m(b)_m}{m!\,(c)_m} \sum_{j=0}^m 
   \frac{(-m)_j(q-p)_j}{j!\,(q)_j} \\
    &=\sum_{j=0}^k (-1)^j 
          \frac{(-k)_j(b)_j(q-p)_j}{j!\,(c)_j(q)_j} 
    \sum_{m=j}^k \frac{(j-k)_{m-j}(b+j)_{m-j}}{(m-j)!\,(c+j)_{m-j}}\\
    &=\frac{1}{(c)_k} \sum_{m=0}^k (-1)^m          
      \frac{(-k)_m(c-b)_m (b)_{k-m}(q-p)_{k-m}} 
        {m!\,(q)_{k-m}}
   \end{aligned}
  \end{equation*}
and thus, choosing $p=\alpha+\beta+3$ and $q=\beta+i-k$,
 \begin{equation*}
 \begin{aligned}
  {}_3F_2&\left(\begin{matrix}-k,\alpha+i+2-k,\alpha+\beta+3\\
  i+1-k,\beta+i-k\end{matrix};1\right)=\frac{1}{(i+1-k)_k}\cdot \\
      &\cdot\sum_{m=\max(0,i-\alpha-3)}^{\min(k,\alpha+1)}          
      (-1)^m\frac{(-k)_m(-\alpha-1)_m(\alpha+i+2-k)_{k-m}(i-k-\alpha-3)_{k-m}} 
        {m!\,(\beta+i-k)_{k-m}}
   \end{aligned}
  \end{equation*}
  Inserting this expression into the right-hand side of (\ref{eq1.9}) and observing that 
  \begin{equation*}
   \begin{aligned}
   (\alpha+3)_{i-1-k}(\alpha+i+2-k)_{k-m}&=(\alpha+3)_{i-1-m}=
    (\alpha+i+1-m)!/(\alpha+2)!\\
   (-\alpha-2)_{i-1-k}(i-k-\alpha-3)_{k-m}&=(-\alpha-2)_{i-1-m}\\
   &=(-1)^{i-1-m}(\alpha+2)!/(\alpha+3-i+m)!\\        
   (\beta+1)_{i-1-k}(\beta+i-k)_{k-m}&=(\beta+1)_{i-1-m},\;(i-k)!(i+1-k)_k=i!
     \end{aligned}
    \end{equation*}
 we arrive at the double sum
  \begin{equation}
   \begin{aligned}
   d_i^{\alpha,\beta}(x)= \Lambda_{2\alpha+4,1}^{\alpha,\beta}\frac{2^i}{i!}
   &\sum_{k=\max(0,\,i-\alpha-3)}^{i-1}
   \frac{1}{(i-1-k)!\,k!} \left(\frac{x-1}{2}\right)^{k+1}\cdot\\
   &\sum_{m=\max(0,\,i-\alpha-3)}^{\min(k,\alpha+1)}
      \frac{(-k)_m(-\alpha-1)_m(\alpha+1+i-m)!} 
              {m!\,(\beta+1)_{i-1-m}(\alpha+3-i+m)!}.
     \label{eq2.1}
    \end{aligned}
    \end{equation}
On the other hand, we expand the new representation (\ref{eq1.13}) of $L_{2\alpha+4,x}^{\alpha,\beta}$ in the form
 \begin{equation*}
    \frac{x-1}{(x+1)^\beta}D_x^{\alpha+2}\big\lbrace(x+1)^{\alpha+\beta+2}D_x^{\alpha+2}\lbrack(x-1)^{\alpha+1}y(x)\rbrack\big\rbrace
    =\sum_{i=1}^{2\alpha+4}e_i^{\alpha,\beta}(x)D_x^i y(x).
        \end{equation*}
So we have to verify that $e_i^{\alpha,\beta}(x)=d_i^{\alpha,\beta}(x)$ for all $1 \le i \le 2\alpha+4$  . To begin with, we see that
 \begin{equation*}
   \begin{aligned}
   &\frac{x-1}{(x+1)^\beta}D_x^{\alpha+2}\big\lbrace(x+1)^{\alpha+\beta+2}D_x^{\alpha+2}\lbrack(x-1)^{\alpha+1}y(x)\rbrack\big\rbrace\\
   &=\frac{x-1}{(x+1)^\beta}D_x^{\alpha+2}\big\lbrace(x+1)^{\alpha+\beta+2}
   \sum_{s=1}^{\alpha+2}\binom{\alpha+2}{s}\frac{(\alpha+1)!}{(s-1)!}
   (x-1)^{s-1}D_x^s y(x)\big\rbrace\\
   &= \sum_{s=1}^{\alpha+2}\binom{\alpha+2}{s}\frac{(\alpha+1)!}{(s-1)!}
   \frac{x-1}{((x+1)^{\beta}}D_x^{\alpha+2}\big\lbrace(x+1)^{\alpha+\beta+2}(x-1)^{s-1} D_x^s y(x)\big\rbrace\\
   &=\sum_{s=1}^{\alpha+2}\binom{\alpha+2}{s}\frac{(\alpha+1)!}{(s-1)!}
   \sum_{r=0}^{\alpha+2}\binom{\alpha+2}{r}
    \frac{x-1}{(x+1)^{\beta}}D_x^{\alpha+2-r}\lbrack(x+1)^{\alpha+\beta+2}(x-1)^{s-1}\rbrack D_x^{s+r} y(x).
       \end{aligned}
    \end{equation*}
 Substituting $s+r=i,\;0\le r\le \alpha+2$, the index $s$ ranges over $i-\alpha-2\le s\le i$. Hence, 
 \begin{equation}
   e_i^{\alpha,\beta}(x)=\sum_{s=\max(1,\,i-\alpha-2)}^{\min(\alpha+2,i)}
   \binom{\alpha+2}{s}\frac{(\alpha+1)!}{(s-1)!}
   \binom{\alpha+2}{i-s} a_s^{\alpha,\beta,i}(x)
   \label{eq2.2}
   \end{equation}
 with
  \begin{equation*}
    \begin{aligned}
    &a_s^{\alpha,\beta,i}(x)=\frac{x-1}{(x+1)^\beta}D_x^{\alpha+2-i+s}
    \lbrack(x+1)^{\alpha+\beta+2}(x-1)^{s-1}\rbrack\\
    &=\sum_{t=\max(0,\alpha+3-i)}^{\alpha+2-i+s}\binom{\alpha+2-i+s}{t}
     \frac{x-1}{(x+1)^{\beta}}
     D_x^t\lbrack (x+1)^{\alpha+\beta+2}\rbrack
      D_x^{\alpha+2-i+s-t}\lbrack (x-1)^{s-1}\rbrack\\
     &=\sum_{t=\max(0,\alpha+3-i)}^{\alpha+2-i+s}\binom{\alpha+2-i+s}{t}
         \frac{(\beta+1)_{\alpha+2}}{(\beta+1)_{\alpha+2-t}}
     \frac{(s-1)!(x+1)^{\alpha+2-t}(x-1)^{i-\alpha-2+t}}{(i-\alpha-3+t)!}.     
        \end{aligned}
     \end{equation*}
Inserting the last sum into (\ref{eq2.2}) and interchanging the order of summation we achieve
 \begin{equation}
    e_i^{\alpha,\beta}(x)=
    \sum_{t=\max(0,\alpha+3-i)}^{\min(2\alpha+4-i,\alpha+2)}
    b_t^{\alpha,\beta,i}(x)           \frac{(\beta+1)_{\alpha+2}}{(\beta+1)_{\alpha+2-t}}
     \frac{(\alpha+1)!(x+1)^{\alpha+2-t}(x-1)^{i-\alpha-2+t}}{(i-\alpha-3+t)!}, 
     \label{eq2.3}
     \end{equation}
where the new inner sum, $b_t^{\alpha,\beta,i}(x)$, simplifies to
  \begin{equation*}
      \begin{aligned}
      &b_t^{\alpha,\beta,i}(x)=\sum_{s=t+i-\alpha-2}^{\min(\alpha+2,i)}
       \binom{\alpha+2}{s}\binom{\alpha+2}{i-s}\binom{\alpha+2-i+s}{t} \\
       &=\sum_{s=0}^{\min(2\alpha+4-i-t,\alpha+2-t)}
      \frac{(\alpha+2)!}{(s+t+i-\alpha-2)!(2\alpha+4-i-t-s)!}
      \frac{(\alpha+2)!}{(\alpha+2-t-s)!\;s!\;t!} \\
      &=\binom{\alpha+2}{t}\binom{\alpha+2}{t+i-\alpha-2}
      \frac{(\alpha+3)_{\alpha+2-t}}{(t+i-\alpha-1)_{\alpha+2-t}}=
     \binom{\alpha+2}{t}\binom{2\alpha+4-t}{i}.
      \end{aligned}
       \end{equation*}
 Moreover we use
  \begin{equation*}
       \begin{aligned}
     (x+1)^{\alpha+2-t}(x-1)^{i-\alpha-2+t}&=2^i
     \sum_{r=0}^{\alpha+2-t}\binom{\alpha+2-t}{r}
       \left(\frac{x-1}{2}\right)^{i-r}\\
       &=2^i \sum_{k=t+i-\alpha-3}^{i-1}\binom{\alpha+2-t}{i-1-k}
        \left(\frac{x-1}{2}\right)^{k+1}
       \end{aligned}
        \end{equation*}
 and interchange the order of summation once more to obtain
 \begin{equation*}
  \begin{aligned}
   &e_i^{\alpha,\beta}(x)=\frac{2^i}{i!}
    \sum_{k=\max(0,\,i-\alpha-3)}^{i-1}
     \frac{(\beta+1)_{\alpha+2}(\alpha+2)!}{(i-1-k)!\;k!} \left(\frac{x-1}{2}\right)^{k+1}\cdot\\
   &\sum_{t=\max(0,\,\alpha+3-i)}^{\min(k-i+\alpha+3,2\alpha+4-i)}
      \frac{k!(2\alpha+4-t)!(\alpha+1)!} 
      {(\beta+1)_{\alpha+2-t}t!(2\alpha+4-i-t)!(i-\alpha-3+t)!(\alpha+3-i-t+k)!}.
      \end{aligned}
      \end{equation*}      
  Again, by another index transformation $t=m-i+\alpha+3$, the inner sum reduces to the same expression as in (\ref{eq2.1}).This concludes the proof of Theorem \ref{thm1.1}.
  
  Notice that for $i=2\alpha+4$ , identity (\ref{eq2.1}) and, even more directly, the equivalent identity (\ref{eq2.3}) reduce to
  \begin{equation}
      d_{2\alpha+4}^{\alpha,\beta}(x)=e_{2\alpha+4}^{\alpha,\beta}(x)=
     (x-1)^{\alpha+2}(x+1)^{\alpha+2}=(x^2-1)^{\alpha+2}.
            \label{eq2.4}
             \end{equation}
  
\section{ The orthogonality relation of the eigensolutions of the Jacobi-type equation}
\label{sec:3}
  
The aim of this section is to show that for different eigenvalues  $\Lambda_{2\alpha+4,n}^{\alpha,\beta,N}$, $n\in\mathbb{N}_0$, the solutions of the Jacobi-type equation (\ref{eq1.7}) are orthogonal with respect to the scalar product
\begin{equation}
(f,g)_{w(\alpha,\beta,0,N)}=\int_{-1}^{1}
f(x)\; g(x)\;w_{\alpha,\beta}(x)dx+Nf(1)g(1)=0,\;f,g \in C[-1,1].
 \label{eq3.1}
  \end{equation}
This, in turn, is a direct consequence of the following fundamental result.
\begin{theorem}
\label{thm3.1}
For $\alpha \in \mathbb{N}_0,\;\beta >-1$, and $N>0$, the combined differential operator in equation (\ref{eq1.7}),
\begin{equation}
L_{2\alpha+4,x}^{\alpha,\beta,N}f=N\;L_{2\alpha+4,x}^{\alpha,\beta}f
+C_{\alpha.\beta}\;L_{2,x}^{\alpha,\beta}f,\;f \in C^{(2\alpha+4)}[-1,1], 
 \label{eq3.2}
  \end{equation}
is symmetric with respect to the scalar product (\ref{eq3.1}) by virtue of 
\begin{equation}
(L_{2\alpha+4,x}^{\alpha,\beta,N}f,g)_{w(\alpha,\beta,0,N)}=
(f,L_{2\alpha+4,x}^{\alpha,\beta,N}g)_{w(\alpha,\beta,0,N)},\;f,g \in C^{(2\alpha+4)}[-1,1].
 \label{eq3.3}
  \end{equation}
  \end{theorem}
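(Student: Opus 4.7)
The plan is to exploit the elementary representation~(\ref{eq1.13}) together with $\alpha+2$ successive integrations by parts. Substituting (\ref{eq1.13}) into the first summand of the scalar product, the factor $(x-1)/(x+1)^{\beta}$ combines with the weight $h_{\alpha,\beta}^{-1}(1-x)^{\alpha}(1+x)^{\beta}$ to produce $(-1)^{\alpha}h_{\alpha,\beta}^{-1}(x-1)^{\alpha+1}$. Setting $u(x) = (x+1)^{\alpha+\beta+2}D_x^{\alpha+2}[(x-1)^{\alpha+1}f(x)]$, the integral thus takes the form
\begin{equation*}
\int_{-1}^{1}L_{2\alpha+4,x}^{\alpha,\beta}f\cdot g\,w_{\alpha,\beta}\,dx = (-1)^{\alpha}h_{\alpha,\beta}^{-1}\int_{-1}^{1}\bigl[(x-1)^{\alpha+1}g(x)\bigr]\,u^{(\alpha+2)}(x)\,dx,
\end{equation*}
and the idea is to shift $D_x^{\alpha+2}$ from $u$ onto $(x-1)^{\alpha+1}g(x)$ by repeated integration by parts.

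The boundary analysis is the heart of the argument. At $x=-1$, every derivative of $(x+1)^{\alpha+\beta+2}$ of order at most $\alpha+1$ vanishes thanks to $\beta>-1$, and by Leibniz so do $u^{(j)}(-1)$ for $0\le j\le\alpha+1$; this kills every boundary term at the left endpoint. At $x=1$, Leibniz applied to $(x-1)^{\alpha+1}g(x)$ shows that its derivatives of order $0,\dots,\alpha$ vanish, while the derivative of order $\alpha+1$ at $x=1$ equals $(\alpha+1)!\,g(1)$. Exactly one boundary term therefore survives, namely $(-1)^{\alpha+1}(\alpha+1)!\,g(1)\cdot u(1)$; a further Leibniz computation yields $u(1)=2^{\alpha+\beta+2}(\alpha+2)!\,f'(1)$. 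The remaining integral reduces to the manifestly symmetric bilinear form
\begin{equation*}
S(f,g) = \int_{-1}^{1}(x+1)^{\alpha+\beta+2}\,D_x^{\alpha+2}\bigl[(x-1)^{\alpha+1}f(x)\bigr]\,D_x^{\alpha+2}\bigl[(x-1)^{\alpha+1}g(x)\bigr]\,dx,
\end{equation*}
so altogether $\int L_{2\alpha+4,x}^{\alpha,\beta}f\cdot g\,w_{\alpha,\beta}\,dx = -h_{\alpha,\beta}^{-1}\,2^{\alpha+\beta+2}(\alpha+1)!(\alpha+2)!\,f'(1)g(1) + h_{\alpha,\beta}^{-1}\,S(f,g)$.

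The asymmetric boundary contribution is absorbed by the point-mass term in the scalar product. Since (\ref{eq1.13}) carries the prefactor $(x-1)$, we have $L_{2\alpha+4,x}^{\alpha,\beta}f(1)=0$, whereas (\ref{eq1.11}) gives $L_{2,x}^{\alpha,\beta}f(1)=2(\alpha+1)f'(1)$; hence $N\,[L_{2\alpha+4,x}^{\alpha,\beta,N}f](1)\,g(1) = 2N(\alpha+1)C_{\alpha,\beta}\,f'(1)\,g(1)$. The surviving $f'(1)g(1)$ contributions then cancel exactly once the numerical identity $h_{\alpha,\beta}^{-1}\,2^{\alpha+\beta+2}(\alpha+1)!(\alpha+2)! = 2(\alpha+1)C_{\alpha,\beta}$ is verified, which follows at once from $(\beta+1)_{\alpha+1}=\Gamma(\alpha+\beta+2)/\Gamma(\beta+1)$ and the explicit formula for $h_{\alpha,\beta}$. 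Combining with the classical symmetry of $L_{2,x}^{\alpha,\beta}$ with respect to $\int \cdot\,w_{\alpha,\beta}\,dx$ (two integrations by parts using the Sturm-Liouville form~(\ref{eq1.11}) together with the vanishing of $(x-1)^{\alpha+1}(x+1)^{\beta+1}$ at the endpoints) then yields~(\ref{eq3.3}).

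The main obstacle is the careful bookkeeping of the $\alpha+2$ integrations by parts and the precise identification of the single surviving boundary term at $x=1$, together with the numerical coincidence that makes it align exactly with the point-mass part of the scalar product. Conceptually, it is this cancellation that explains \emph{a posteriori} why the discrete mass at $x=1$ had to be present in the inner product~(\ref{eq3.1}) for the Jacobi-type operator to become symmetric.
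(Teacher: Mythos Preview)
Your argument is correct and follows essentially the same route as the paper's own proof: both insert the elementary representation~(\ref{eq1.13}), perform $\alpha+2$ integrations by parts, show that all boundary terms vanish except the single one at $x=1$ with $j=\alpha+1$, evaluate it as $-2(\alpha+1)C_{\alpha,\beta}f'(1)g(1)$ via the identity $h_{\alpha,\beta}^{-1}2^{\alpha+\beta+2}(\alpha+1)!(\alpha+2)!=2(\alpha+1)C_{\alpha,\beta}$, and then observe that this is exactly cancelled by the point-mass contribution $N\,C_{\alpha,\beta}L_{2,x}^{\alpha,\beta}f(1)\,g(1)=2N(\alpha+1)C_{\alpha,\beta}f'(1)g(1)$. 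The paper merely packages the three ingredients (the integration-by-parts identity, the classical symmetry of $L_{2,x}^{\alpha,\beta}$, and the endpoint values $L_{2\alpha+4,x}^{\alpha,\beta}f(1)=0$, $L_{2,x}^{\alpha,\beta}f(1)=2(\alpha+1)f'(1)$) into a separate proposition before combining them, whereas you carry out the same computations inline.
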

  
\begin{proposition}
For any function  $f,g \in C^{(2\alpha+4)}[-1,1]$ we define the two integrals
\begin{equation*}
\begin{aligned}
S^{\alpha}(f,g)&=h_{\alpha,\beta}^{-1}\int_{-1}^1
D_x^{\alpha+2}\lbrack(x-1)^{\alpha+1}f(x)\rbrack
D_x^{\alpha+2}\lbrack(x-1)^{\alpha+1}g(x)\rbrack(x+1)^{\alpha+\beta+2}dx,\\
T(f,g)&=h_{\alpha,\beta}^{-1}\int_{-1}^1
f'(x)g'(x)(1-x)^{\alpha+1}(1+x)^{\beta+1}dx.
\end{aligned}
 \end{equation*}	
Then the differential operator (\ref{eq3.2}) has the properties
 \begin{align*}
(i) \quad &
(L_{2\alpha+4,x}^{\alpha,\beta}f,g)_{w(\alpha,\beta)}=
S^{\alpha}(f,g)-2(\alpha+1)\;C_{\alpha.\beta}f'(1)g(1)\\
(ii) \quad &
(L_{2,x}^{\alpha,\beta}f,g)_{w(\alpha,\beta)}=T(f,g)\\
(iii) \quad &
L_{2\alpha+4,x}^{\alpha,\beta}f(x)\big\vert_{x=1}=0,\;
L_{2,x}^{\alpha,\beta}f(x)\big\vert_{x=1}=2(\alpha+1)f'(1).
\end{align*}   
\label{prop3.1}
\end{proposition}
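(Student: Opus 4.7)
The plan is to derive each assertion by direct calculation from the closed-form representations of the two operators: formula (\ref{eq1.13}) from Theorem~\ref{thm1.1} for $L_{2\alpha+4,x}^{\alpha,\beta}$, and the self-adjoint form (\ref{eq1.11}) for $L_{2,x}^{\alpha,\beta}$. Items (ii) and (iii) are essentially immediate; the real content lies in (i), which hinges on an $(\alpha+2)$-fold integration by parts and a careful accounting of the single boundary term that survives.

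For (ii), I would substitute the second line of (\ref{eq1.11}) into $(L_{2,x}^{\alpha,\beta}f,g)_{w(\alpha,\beta)}$ and cancel the prefactors $(x-1)^{-\alpha}(x+1)^{-\beta}$ against the weight $(1-x)^\alpha(1+x)^\beta$; up to an overall sign, this produces an integral of the form $\int D_x[(x-1)^{\alpha+1}(x+1)^{\beta+1}f'(x)]\,g(x)\,dx$. One integration by parts, with boundary terms killed by the positive powers of $(1-x)$ and $(1+x)$, followed by the sign conversion $(x-1)^{\alpha+1}=(-1)^{\alpha+1}(1-x)^{\alpha+1}$, delivers $T(f,g)$. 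For (iii), expanding the first line of (\ref{eq1.11}) at $x=1$ gives $0\cdot f''(1)+[\alpha-\beta+(\alpha+\beta+2)]f'(1)=2(\alpha+1)f'(1)$, while in (\ref{eq1.13}) the explicit prefactor $(x-1)$ forces $L_{2\alpha+4,x}^{\alpha,\beta}f(x)|_{x=1}=0$, provided only that the nested expression is bounded at $x=1$, which is automatic from the smoothness of the interior derivatives.

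The core of the argument is (i). Combining (\ref{eq1.13}) with the weight via the identity $(x-1)(x+1)^{-\beta}(1-x)^{\alpha}(1+x)^\beta=-(1-x)^{\alpha+1}$, the inner product becomes
\begin{equation*}
-h_{\alpha,\beta}^{-1}\int_{-1}^1(1-x)^{\alpha+1}g(x)\,D_x^{\alpha+2}\bigl\{(x+1)^{\alpha+\beta+2}D_x^{\alpha+2}[(x-1)^{\alpha+1}f(x)]\bigr\}\,dx.
\end{equation*}
I would then integrate by parts $\alpha+2$ times, transferring the outer $D_x^{\alpha+2}$ onto $(1-x)^{\alpha+1}g(x)$. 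All boundary contributions at $x=-1$ vanish because $(x+1)^{\alpha+\beta+2}$, differentiated at most $\alpha+1$ times, still carries a positive power of $(x+1)$ (since $\beta>-1$); at $x=1$ the analogous argument using $(1-x)^{\alpha+1}$ kills every term except the single one in which all $\alpha+1$ intermediate derivatives have fallen on $(1-x)^{\alpha+1}g$ and none yet on the inner expression. After the pair of sign flips arising from $(1-x)^{\alpha+1}=(-1)^{\alpha+1}(x-1)^{\alpha+1}$ and the factor $(-1)^{\alpha+2}$ of iterated integration by parts, the remaining bulk integral is exactly $S^\alpha(f,g)$.

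It remains to evaluate the lone surviving boundary term. By Leibniz only one summand survives in each relevant expansion,
\begin{equation*}
D_x^{\alpha+1}[(1-x)^{\alpha+1}g(x)]\big|_{x=1}=(-1)^{\alpha+1}(\alpha+1)!\,g(1),\qquad D_x^{\alpha+2}[(x-1)^{\alpha+1}f(x)]\big|_{x=1}=(\alpha+2)!\,f'(1),
\end{equation*}
together with $(x+1)^{\alpha+\beta+2}|_{x=1}=2^{\alpha+\beta+2}$. Multiplying these against the $(-1)^{\alpha+1}$ from the $(\alpha+1)$-st integration by parts and the overall prefactor $-h_{\alpha,\beta}^{-1}$, the resulting constant collapses via $h_{\alpha,\beta}^{-1}\alpha!\,2^{\alpha+\beta+2}=2(\beta+1)_{\alpha+1}$ to precisely $-2(\alpha+1)C_{\alpha,\beta}f'(1)g(1)$. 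The main obstacle throughout is the sign-and-index bookkeeping through the repeated conversions between $(x-1)^{\alpha+1}$ and $(1-x)^{\alpha+1}$; the essential mathematical point is simply the isolation of exactly one surviving boundary term, dictated by the matched exponent $\alpha+1$ against the outer differentiation order $\alpha+2$.
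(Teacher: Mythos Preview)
Your proof is correct and follows essentially the same approach as the paper: for (i) you insert the representation (\ref{eq1.13}), perform an $(\alpha+2)$-fold integration by parts, argue that all boundary terms vanish except the single one with $j=\alpha+1$ at $x=1$, and evaluate it via Leibniz and the explicit formula for $h_{\alpha,\beta}$; parts (ii) and (iii) are handled identically as well. Your write-up is in fact slightly more explicit than the paper's about \emph{why} the boundary contributions at $x=-1$ vanish (via the surviving positive power $(x+1)^{\beta+1+j}$), but the underlying argument is the same.
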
	
 \begin{proof} 
 (i) In view of the representation (\ref{eq1.13}) of $L_{2\alpha+4,x}^{\alpha,\beta}$ it follows by an  $(\alpha+2)$-fold integration by parts that
 \begin{equation*}
 \begin{aligned}
 &h_{\alpha,\beta}(L_{2\alpha+4,x}^{\alpha,\beta}f,g)_{w(\alpha,\beta)}\\
 &=(-1)^{\alpha}\int_{-1}^1
 D_x^{\alpha+2} \big\lbrace (x+1)^{\alpha+\beta+2}D_x^{\alpha+2}
 \lbrack(x-1)^{\alpha+1}f(x)\rbrack\big\rbrace(x-1)^{\alpha+1}g(x)dx\\
 &=\sum_{j=0}^{\alpha+1}(-1)^{\alpha+j}
  D_x^{\alpha+1-j}\big\lbrace (x+1)^{\alpha+\beta+2}
  D_x^{\alpha+2}\lbrack(x-1)^{\alpha+1}f(x)\rbrack\big\rbrace D_x^j\lbrack(x-1)^{\alpha+1}g(x)\rbrack \big\vert_{x=-1}^{x=1}\\
  &\;+\int_{-1}^1
  D_x^{\alpha+2}\lbrack(x-1)^{\alpha+1}f(x)\rbrack
  D_x^{\alpha+2}\lbrack(x-1)^{\alpha+1}g(x)\rbrack(x+1)^{\alpha+\beta+2}dx.
  \end{aligned}
    \end{equation*}
   Here, all terms of the sum vanish up to the last one for $j=\alpha+1$, evaluated at $x=1$. Hence,
 \begin{equation*}
  \begin{aligned}
  &(L_{2\alpha+4,x}^{\alpha,\beta}f,g)_{w(\alpha,\beta)}\\
  &=S^{\alpha}(f,g)-h_{\alpha,\beta}^{-1}(x+1)^{\alpha+\beta+2}
  D_x^{\alpha+2}\lbrack(x-1)^{\alpha+1}f(x)\rbrack
    D_x^{\alpha+1}\lbrack(x-1)^{\alpha+1}g(x)\rbrack\big\vert_{x=1}\\
   &=S^{\alpha}(f,g)-h_{\alpha,\beta}^{-1}2^{\alpha+\beta+2}
   (\alpha+2)!f'(1)(\alpha+1)!\;g(1)\\
    &=S^{\alpha}(f,g)-2(\alpha+1)C_{\alpha,\beta}f'(1)g(1).        
     \end{aligned}
     \end{equation*}
  (ii)	Employing the second representation of $L_{2,x}^{\alpha,\beta}$ in (\ref{eq1.11}) we find, now by a simple integration by parts, that 
  \begin{equation*} 
 (L_{2,x}^{\alpha,\beta}f,g)_{w(\alpha,\beta)}=(-1)^{\alpha}h_{\alpha,\beta}^{-1}
   \int_{-1}^1 D_x\lbrace (x-1)^{\alpha+1}(x+1)^{\beta+1}D_xf(x)\rbrace g(x)dx=T(f,g).
  \end{equation*}	
  (iii) The required values of the two differential expressions at $x=1$  follow by definition (\ref{eq1.13}) and (\ref{eq1.11}), respectively.
   \end{proof}     
  {\itshape Proof of Theorem 3.1} In view of Proposition 3.2 and the symmetry relations $S^{\alpha}(f,g)=S^{\alpha}(g,f)$ and $T(f,g)$ $=T(g,f)$, we obtain the required result (\ref{eq3.3}), i.e.
   \begin{equation*}
    \begin{aligned}
    &(L_{2\alpha+4,x}^{\alpha,\beta,N}f,g)_{w(\alpha,\beta,0,N)}\\
    &=(\lbrace N\;L_{2\alpha+4,x}^{\alpha,\beta}+C_{\alpha,\beta}
    L_{2,x}^{\alpha,\beta}\rbrace f,g)_{w(\alpha,\beta)}
    +N\lbrace N\;L_{2\alpha+4,x}^{\alpha,\beta}+C_{\alpha,\beta}
        L_{2,x}^{\alpha,\beta}\rbrace f(x)\big\vert_{x=1}\;g(1)\\
    &=N\;S^{\alpha}(f,g)-N\;2(\alpha+1)C_{\alpha,\beta}f'(1)g(1)
    +C_{\alpha,\beta}T(f,g)+N\;C_{\alpha,\beta}2(\alpha+1)f'(1)g(1)\\
    &= (f,L_{2\alpha+4,x}^{\alpha,\beta,N}g)_{w(\alpha,\beta,0,N)}.
    \end{aligned}
       \end{equation*}
  \begin{corollary}
     \label{cor3.1}
     For $\alpha\in\mathbb{N}_0,\;\beta>-1,\;N>0$, the polynomial eigensolutions of equation (\ref{eq1.7}),\;$y_n(x)= P_n^{\alpha,\beta,0,N}(x)$, $n\in\mathbb{N}_0$, satisfy the orthogonality relation, for $n \ne m$,
     \begin{equation}
     (y_n,y_m)_{w(\alpha,\beta,0,N)}=\int_{-1}^{1}
     y_n(x)\; y_m(x)\;w_{\alpha,\beta}(x)dx+Ny_n(1)y_m(1)=0.
      \label{eq3.4}
      \end{equation}	
      \end{corollary}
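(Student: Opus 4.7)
The approach is purely spectral-theoretic and rests entirely on Theorem~3.1: once the combined operator $L_{2\alpha+4,x}^{\alpha,\beta,N}$ is known to be symmetric with respect to $(\cdot,\cdot)_{w(\alpha,\beta,0,N)}$, eigenfunctions attached to distinct eigenvalues must be orthogonal. Thus the corollary reduces to two routine tasks: recasting (\ref{eq1.7}) as a genuine eigenvalue equation for $L_{2\alpha+4,x}^{\alpha,\beta,N}$, and checking that the spectrum is simple.

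First I would rewrite (\ref{eq1.7}) in eigenvalue form. Using the elementary Pochhammer identities $(n)_{\alpha+2}=n\,(n+1)_{\alpha+1}$, $(n+\beta)_{\alpha+2}=(n+\beta)_{\alpha+1}(n+\alpha+\beta+1)$, and $C_{\alpha,\beta}=(2)_{\alpha+1}(\beta+1)_{\alpha+1}$, the sum $N\Lambda_{2\alpha+4,n}^{\alpha,\beta}+C_{\alpha,\beta}\Lambda_{2,n}^{\alpha,\beta}$ collapses exactly to $\Lambda_{2\alpha+4,n}^{\alpha,\beta,N}$ as defined in (\ref{eq1.15}). Hence equation (\ref{eq1.7}) is equivalent to
\begin{equation*}
L_{2\alpha+4,x}^{\alpha,\beta,N}\,y_n(x)=\Lambda_{2\alpha+4,n}^{\alpha,\beta,N}\,y_n(x),\qquad n\in\mathbb{N}_0.
\end{equation*}
Applying Theorem~\ref{thm3.1} with $f=y_n$ and $g=y_m$ then yields
\begin{equation*}
\bigl(\Lambda_{2\alpha+4,n}^{\alpha,\beta,N}-\Lambda_{2\alpha+4,m}^{\alpha,\beta,N}\bigr)\,(y_n,y_m)_{w(\alpha,\beta,0,N)}=0,
\end{equation*}
so (\ref{eq3.4}) will follow as soon as the eigenvalue sequence is injective.

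To verify the latter I would factor
\begin{equation*}
\Lambda_{2\alpha+4,n}^{\alpha,\beta,N}=n(n+\alpha+\beta+1)\bigl[\,N(n+1)_{\alpha+1}(n+\beta)_{\alpha+1}+C_{\alpha,\beta}\,\bigr].
\end{equation*}
Since $\alpha\in\mathbb{N}_0$, $\beta>-1$, and $N>0$, one has $C_{\alpha,\beta}>0$ and, for $n\geq 1$, both the factor $n(n+\alpha+\beta+1)$ and the bracket are strictly positive and strictly increasing in $n$; at $n=0$ the eigenvalue vanishes. Consequently $n\mapsto\Lambda_{2\alpha+4,n}^{\alpha,\beta,N}$ is strictly monotone on $\mathbb{N}_0$, all eigenvalues are distinct, and the orthogonality relation (\ref{eq3.4}) follows. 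There is no real obstacle here beyond this transparent monotonicity check; all of the substance of the corollary is already contained in the symmetry statement of Theorem~\ref{thm3.1}, which in turn rests on the elementary representation (\ref{eq1.13}).
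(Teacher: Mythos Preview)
Your proposal is correct and follows essentially the same route as the paper: use the symmetry of $L_{2\alpha+4,x}^{\alpha,\beta,N}$ from Theorem~\ref{thm3.1} together with the eigenvalue equation to obtain $(\Lambda_{2\alpha+4,n}^{\alpha,\beta,N}-\Lambda_{2\alpha+4,m}^{\alpha,\beta,N})(y_n,y_m)_{w(\alpha,\beta,0,N)}=0$, then conclude by distinctness of the eigenvalues. The only differences are cosmetic: the paper cites the eigenvalue form (\ref{eq1.14}) directly rather than rederiving it from (\ref{eq1.7}), and it simply asserts that the eigenvalue differences do not vanish for $n\neq m$, whereas you supply the explicit monotonicity argument.
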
     
   \begin{proof} 
   In view of equation (\ref{eq1.14}) and the symmetry property stated in Theorem \ref{thm3.1}, we have 
   \begin{equation*}
    \begin{aligned}
   &\left(\Lambda_{2\alpha+4,n}^{\alpha,\beta,N}-\Lambda_{2\alpha+4,m}^{\alpha,\beta,N} \right) (y_n,y_m)_{w(\alpha,\beta,0,N)}\\
   &=(L_{2\alpha+4,x}^{\alpha,\beta,N}y_n,y_m)_{w(\alpha,\beta,0,N)}-
   (y_n,L_{2\alpha+4,x}^{\alpha,\beta,N}y_m)_{w(\alpha,\beta,0,N)}=0.
    \end{aligned}
   \end{equation*}
   Since the difference of the eigenvalues on the left-hand side do not vanish for $n \ne m$, the assertion follows.
   \end{proof}  
  \section{A new factorization of the Jacobi-type differential equation} 
  \label{sec:4}
  
  Let us begin with a slightly different version of Bavinck's factorization formula (\ref{eq1.12}). Setting $y(x)=(x-1)u(x)$  and recalling the definition (\ref{eq1.11}) we obtain 
  \begin{equation}
   \begin{aligned}
   L_{2\alpha+4,x}^{\alpha,\beta}[(x-1)u(x)] &=\prod_{j=0}^{\alpha+1}\big\lbrace L_{2,x}^{\alpha,\beta}-\frac{2(\alpha+1)}{x-1}+j(\alpha+\beta+1-j)\big\rbrace [(x-1)u(x)]\\
   &=(x-1)\prod_{j=0}^{\alpha+1}\big\lbrace L_{2,x}^{\alpha+2,\beta}+(j+1)(\alpha+\beta+2-j)\big\rbrace u(x).
  \label{eq4.1}
    \end{aligned}
     \end{equation}
  Here, the second identity follows by successively applying, for any $j=0,1,\dots,\alpha+1$, 
   \begin{equation}
     \begin{aligned}
     &\lbrace (x^2-1)D_x^2+[\alpha-\beta+(\alpha+\beta+2)x]D_x  -\frac{2(\alpha+1)}{x-1}+j(\alpha+\beta+1-j)\rbrace [(x-1)u(x)]\\
     &=(x-1)\lbrace (x^2-1)D_x^2+[\alpha-\beta+2+(\alpha+\beta+4)x]\,D_x+  (j+1)(\alpha+\beta+2-j)\rbrace u(x).
    \label{eq4.2}
    \end{aligned}
       \end{equation} 
       \linebreak[1]
  \begin{corollary}
    \label{cor4.1}
    \cite[(2.6)]{Ba} Let the functions $R_n^{\alpha,\beta}(x)$, $n\in\mathbb{N}$, be defined as in (\ref{eq1.5}). Then 
    \begin{equation}
    L_{2\alpha+4,x}^{\alpha,\beta}R_n^{\alpha,\beta}(x)=\Lambda_{2\alpha+4,n}^{\alpha,\beta}R_n^{\alpha,\beta}(x),\;n\in\mathbb{N}.
     \label{eq4.3}
     \end{equation}	
        \end{corollary}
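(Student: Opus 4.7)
The plan is to apply the factorization identity (\ref{eq4.1}) to the function $u(x) = A_n^{\alpha,\beta}P_{n-1}^{\alpha+2,\beta}(x)$. Since $(x-1)u(x) = R_n^{\alpha,\beta}(x)$ by definition (\ref{eq1.5}), formula (\ref{eq4.1}) gives at once
\begin{equation*}
L_{2\alpha+4,x}^{\alpha,\beta}R_n^{\alpha,\beta}(x) = A_n^{\alpha,\beta}(x-1)\prod_{j=0}^{\alpha+1}\bigl\{L_{2,x}^{\alpha+2,\beta}+(j+1)(\alpha+\beta+2-j)\bigr\}P_{n-1}^{\alpha+2,\beta}(x).
\end{equation*}

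The next step is to exploit the classical Jacobi eigenequation: by (\ref{eq1.10})–(\ref{eq1.11}) applied with a shift of parameters, $L_{2,x}^{\alpha+2,\beta}$ acts on $P_{n-1}^{\alpha+2,\beta}(x)$ as multiplication by $\Lambda_{2,n-1}^{\alpha+2,\beta}=(n-1)(n+\alpha+\beta+2)$. Since every operator factor in the displayed product is a scalar perturbation of $L_{2,x}^{\alpha+2,\beta}$, they all share $P_{n-1}^{\alpha+2,\beta}(x)$ as a common eigenfunction, so the product reduces to the scalar
\begin{equation*}
\prod_{j=0}^{\alpha+1}\mu_j,\qquad \mu_j := (n-1)(n+\alpha+\beta+2)+(j+1)(\alpha+\beta+2-j),
\end{equation*}
times $R_n^{\alpha,\beta}(x)$.

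It remains to identify $\prod_{j=0}^{\alpha+1}\mu_j$ with $\Lambda_{2\alpha+4,n}^{\alpha,\beta}=(n)_{\alpha+2}(n+\beta)_{\alpha+2}$. Viewing $\mu_j$ as a quadratic in $j$ and computing its discriminant, which simplifies to the perfect square $(2n+\alpha+\beta+1)^2$, one obtains the factorization
\begin{equation*}
\mu_j = (n+j)(n+\alpha+\beta+1-j),\qquad j=0,1,\dots,\alpha+1.
\end{equation*}
As $j$ runs over $0,\dots,\alpha+1$, the first factor sweeps out $n,n+1,\ldots,n+\alpha+1$, yielding $(n)_{\alpha+2}$, while the second sweeps out $n+\alpha+\beta+1,\ldots,n+\beta$, yielding $(n+\beta)_{\alpha+2}$. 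The product is therefore precisely $\Lambda_{2\alpha+4,n}^{\alpha,\beta}$ as given in (\ref{eq1.10}), which completes the argument.

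The only non-routine point is this quadratic factorization of $\mu_j$; everything else is an immediate consequence of (\ref{eq4.1}) together with the classical Jacobi eigenvalue equation. I expect the discriminant computation to be the main (and only) obstacle, and it is essentially a one-line verification.
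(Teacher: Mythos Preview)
Your argument is correct and coincides with the paper's primary proof of Corollary \ref{cor4.1}: apply the factorization (\ref{eq4.1}) with $u=A_n^{\alpha,\beta}P_{n-1}^{\alpha+2,\beta}$, use the Jacobi eigenequation $L_{2,x}^{\alpha+2,\beta}P_{n-1}^{\alpha+2,\beta}=(n-1)(n+\alpha+\beta+2)P_{n-1}^{\alpha+2,\beta}$, and factor each scalar $\mu_j=(n+j)(n+\alpha+\beta+1-j)$ to recover $(n)_{\alpha+2}(n+\beta)_{\alpha+2}$. The paper also supplies a second proof, bypassing (\ref{eq4.1}) altogether and instead applying the elementary representation (\ref{eq1.13}) directly to $R_n^{\alpha,\beta}$ via the iterated differentiation formulas (\ref{eq4.5})--(\ref{eq4.6}); you may find that alternative worth a look.
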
  
    \begin{proof} 
    In view of formula (\ref{eq4.1}) and the Jacobi equation with first parameter being increased to $\alpha+2$,
     \begin{equation}
      \begin{aligned}
    &L_{2\alpha+4,x}^{\alpha,\beta}R_n^{\alpha,\beta}(x)\\
    &=A_n^{\alpha,\beta}(x-1)\prod_{j=0}^{\alpha+1}\lbrace L_{2,x}^{\alpha+2,\beta}+(j+1)(\alpha+\beta+2-j)\rbrace P_{n-1}^{\alpha+2,\beta}(x)\\
    &=A_n^{\alpha,\beta}(x-1)\prod_{j=0}^{\alpha+1}\lbrace (n-1)(n+\alpha+\beta+2)+(j+1)(\alpha+\beta+2-j)\rbrace\cdot P_{n-1}^{\alpha+2,\beta}(x)\\
    &= \prod_{j=0}^{\alpha+1}\lbrace (n+j)(\alpha+\beta+1-j)\rbrace \cdot A_n^{\alpha,\beta}(x-1)P_{n-1}^{\alpha+2,\beta}(x)\\ 
    &=(n)_{\alpha+2}(n+\beta)_{\alpha+2}A_n^{\alpha,\beta}(x-1) P_{n-1}^{\alpha+2,\beta}(x)=\Lambda_{2\alpha+4,n}^{\alpha,\beta}
    R_n^{\alpha,\beta}(x).
     \end{aligned}
     \label{eq4.4}
     \end{equation}  
       \end{proof}     
  Another elegant proof is solely based on our representation (\ref{eq1.13}) of $L_{2\alpha+4,x}^{\alpha,\beta}$. In order to carry out the two higher-order differentiations $D_x^{\alpha+2}$ occurring there, we iteratively use the following two differentiation formulas which may be derived from standard properties of the Jacobi polynomials \cite [Sec.10.8]{HTF2},
   \begin{equation}
      D_x \lbrack (x-1)^\gamma P_n^{\gamma,\delta}(x) \rbrack=
      (n+\gamma) (x-1)^{\gamma-1}P_n^{\gamma-1,\delta+1}(x),\;\gamma>0,\;\delta>-1,
   \label{eq4.5}
   \end{equation}
   \begin{equation}
        D_x \lbrack (x+1)^\delta P_n^{\gamma,\delta}(x) \rbrack=
        (n+\delta) (x+1)^{\delta-1}P_n^{\gamma+1,\delta-1}(x),\;\gamma>-1,\;\delta>0.
    \label{eq4.6}
    \end{equation}     	
  Then we get, for any $n\in\mathbb{N}$,
   \begin{equation}
   \begin{aligned}
    L_{2\alpha+4,x}^{\alpha,\beta}&R_n^{\alpha,\beta}(x)=
    \frac{x-1}{(x+1)^\beta}D_x^{\alpha+2}\big\lbrace(x+1)^{\alpha+\beta+2}
    D_x^{\alpha+2}\lbrack(x-1)^{\alpha+2}A_n^{\alpha,\beta}
    P_{n-1}^{\alpha+2,\beta}(x)\rbrack\big\rbrace \\   
    &=A_n^{\alpha,\beta}\frac{x-1}{(x+1)^\beta}D_x^{\alpha+2}
    \big\lbrace(x+1)^{\alpha+\beta+2}(n)_{\alpha+2}       
    P_{n-1}^{0,\alpha+\beta+2}(x)\big\rbrace\\
    &=A_n^{\alpha,\beta}(x-1)(n+\beta)_{\alpha+2}(n)_{\alpha+2}
      P_{n-1}^{\alpha+2,\beta}(x)=
      \Lambda_{2\alpha+4,n}^{\alpha,\beta}R_n^{\alpha,\beta}(x).
      \end{aligned}
      \label{eq4.7}
      \end{equation}  
             
  The main purpose of this section is to present a factorization of the Jacobi-type differential operator which is distinct from (\ref{eq1.2}) and more reminiscent of our non-commutative factorization of the symmetric ultraspherical-type equation \cite[Thm. 4.1]{Ma}.
  \begin{theorem}
  \label{thm4.1}
   For $\alpha \in \mathbb{N}_0,\;\beta >-1$, the Jacobi-type differential operator (\ref{eq1.13}) can be factorized by
  \begin{equation}
  \begin{aligned}
  &L_{2\alpha+4,x}^{\alpha,\beta}y(x)=\prod_{j=0}^{\alpha+1}\big\lbrace L_{2,x}^{2j-1,\beta}-\frac{4j}{x-1}+j(j+\beta)\big\rbrace y(x),\\
  &L_{2,x}^{2j-1,\beta}=(x^2-1)D_x^2+[2j-\beta-1+(2j+\beta+1)x]D_x,\;
    j=0,1,\dots,\alpha+1.
    \end{aligned}
    \label{eq4.8}
    \end{equation}
     \end{theorem}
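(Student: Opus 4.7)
The plan is to prove (\ref{eq4.8}) by induction on $\alpha$, extending down to the base case $\alpha=-1$, at which value representation (\ref{eq1.13}) collapses to $L_{2,x}^{-1,\beta}y=\tfrac{x-1}{(x+1)^\beta}D_x[(x+1)^{\beta+1}y']=(x^2-1)y''+(\beta+1)(x-1)y'$; this coincides with the single factor $M_0:=L_{2,x}^{-1,\beta}$ on the right-hand side of (\ref{eq4.8}). Reading the product as $M_{\alpha+1}M_\alpha\cdots M_1 M_0$ with the highest index outermost (the ordering forced by the singular term $-\tfrac{4j}{x-1}$ in $M_j$ together with the fact that $M_0 y$ always carries an overall factor $(x-1)$), the inductive step reduces to establishing the operator recurrence
\begin{equation*}
L_{2\alpha+4,x}^{\alpha,\beta}y \;=\; M_{\alpha+1}\bigl\{L_{2\alpha+2,x}^{\alpha-1,\beta}y\bigr\},\qquad \alpha\in\mathbb{N}_0,
\end{equation*}
whose iteration then yields (\ref{eq4.8}).

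The key tool for proving the recurrence is the Leibniz-based conjugation identity
\begin{equation*}
(x-1)^{-j}L_{2,x}^{\gamma,\delta}[(x-1)^j z] \;=\; L_{2,x}^{2j+\gamma,\delta}z + j(j+1+\gamma+\delta)z + \tfrac{2j(j+\gamma)}{x-1}z,
\end{equation*}
together with its obvious $(x+1)$-analog. Applied with $j=1$, $\gamma=2\alpha+1$, $\delta=\beta$ to $L_{2\alpha+2,x}^{\alpha-1,\beta}y=(x-1)\tilde z$, where $\tilde z:=(x+1)^{-\beta}D_x^{\alpha+1}[(x+1)^{\alpha+\beta+1}D_x^{\alpha+1}((x-1)^\alpha y)]$, the singular contribution $\tfrac{4(\alpha+1)}{x-1}$ cancels exactly against the built-in correction $-\tfrac{4(\alpha+1)}{x-1}$ in $M_{\alpha+1}$, and the remaining constants combine to $(\alpha+2)(\alpha+\beta+2)$, yielding
\begin{equation*}
M_{\alpha+1}\bigl\{L_{2\alpha+2,x}^{\alpha-1,\beta}y\bigr\} \;=\; (x-1)\bigl\{L_{2,x}^{2\alpha+3,\beta}+(\alpha+2)(\alpha+\beta+2)\bigr\}\tilde z.
\end{equation*}

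On the other side, two applications of the Leibniz rule to the inner and outer $D_x^{\alpha+2}$ layers of (\ref{eq1.13}), based on the splittings $(x-1)^{\alpha+1}y=(x-1)\cdot(x-1)^\alpha y$ and $(x+1)^{\alpha+\beta+2}=(x+1)\cdot(x+1)^{\alpha+\beta+1}$, reduce $L_{2\alpha+4,x}^{\alpha,\beta}y$ to a second-order expression in the single function $C:=D_x^{\alpha+1}[(x+1)^{\alpha+\beta+1}D_x^{\alpha+1}((x-1)^\alpha y)]$. The $(x+1)$-analog of the conjugation identity, used to move the factor $(x+1)^{-\beta}$ through $L_{2,x}^{2\alpha+3,\beta}$, then identifies this with the expression above in $\tilde z=(x+1)^{-\beta}C$ and completes the induction. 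The main obstacle is the arithmetic bookkeeping in these Leibniz expansions: one has to check that the residual $\tfrac{1}{x\pm 1}$ contributions from both sides cancel and that the two independently produced constants agree, both of which rely on elementary but delicate computations rather than any new conceptual ingredient beyond the conjugation behavior of $L_{2,x}^{\gamma,\delta}$ under weight shifts.
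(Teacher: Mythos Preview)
Your proposal is correct and follows essentially the same route as the paper: both arguments prove the factorization by establishing the one-step recurrence $L_{2\alpha+4,x}^{\alpha,\beta}y=M_{\alpha+1}\{L_{2\alpha+2,x}^{\alpha-1,\beta}y\}$ (the paper records this as Corollary~\ref{cor4.2}) via conjugation of $L_{2,x}^{\gamma,\delta}$ by powers of $x\pm 1$. The only organizational difference is that the paper first performs the global substitutions $y=(x-1)u$ and $u\mapsto (x+1)^{\beta}u$ (identities (\ref{eq4.9})--(\ref{eq4.11})), which strip off the singular $\tfrac{4j}{x-1}$ term and the weight $(x+1)^{-\beta}$ once and for all; the resulting recurrence (Proposition after Theorem~\ref{thm4.1}) is then a clean Leibniz computation on $u_j^\beta=(x+1)^{j+\beta}D_x^j[(x-1)^j u]$, whereas you carry both conjugations inside each inductive step. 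Your ``arithmetic bookkeeping'' is exactly the content of that Proposition, and your $(x\pm 1)$-conjugation identities are the paper's (\ref{eq4.2}) and (\ref{eq4.10}).
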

  Here, the product $\prod_{j=0}^{\alpha+1}$ is understood as a successive application of each second-order operator to the respective function on its right-hand side, in the order from $j=0$ to $j=\alpha+1$.
  \begin{proof} 
  Analogously to (\ref{eq4.1}),(\ref{eq4.2}), identity (\ref{eq4.8}) is equivalent to
   \begin{equation}
   L_{2\alpha+4,x}^{\alpha,\beta}[(x-1)u(x)] =(x-1)\prod_{j=0}^{\alpha+1}\big\lbrace L_{2,x}^{2j+1,\beta}+(j+1)(j+\beta+1)
   \big\rbrace u(x).
    \label{eq4.9}
    \end{equation}
    Moreover, it is not hard to see that 
   \begin{equation}
    \begin{aligned}
    (x+1)^{\beta} &\lbrace L_{2,x}^{2j+1,\beta}+(j+1)(j+\beta+1)\big\rbrace u(x)\\
    &= \big\lbrace L_{2,x}^{2j+1,-\beta}+(j+1)(j-\beta+1)\big\rbrace \lbrack (x+1)^{\beta} u(x) \rbrack.
    \end{aligned}
      \label{eq4.10}
      \end{equation}
  So we have to verify that 
  \begin{equation}
   \begin{aligned}
      (x+1)^{\beta}&(x-1)^{-1} L_{2\alpha+4,x}^{\alpha,\beta}[(x-1)u(x)]\\
      &=D_x^{\alpha+2}\big\lbrace(x+1)^{\alpha+\beta+2}
          D_x^{\alpha+2}\lbrack(x-1)^{\alpha+2}u(x)\rbrack\big\rbrace\\
     &=\prod_{j=0}^{\alpha+1}\big\lbrace L_{2,x}^{2j+1,-\beta}+(j+1)(j-\beta+1)\big\rbrace \lbrack (x+1)^{\beta}u(x)\rbrack.
       \end{aligned}
       \label{eq4.11}
      \end{equation}  
  This, however, follows by successively applying the following recurrence relation. 
      \end{proof}   
 \begin{proposition}
 For any $j\in\mathbb{N}_0$, the expression $u_j^\beta(x)=(x+1)^{j+\beta}D_x^j [(x-1)^j u(x)]$ satisfies
  \begin{equation}
 D_x^{j+1}u_{j+1}^\beta(x)=\big\lbrace L_{2,x}^{2j+1,-\beta}+(j+1)(j-\beta+1)\big\rbrace
 D_x^ju_j^\beta(x).
   \label{eq4.12}
    \end{equation}	
 \end{proposition}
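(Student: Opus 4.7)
The plan is to reduce the proof to an intermediate \emph{first-order} identity that expresses $u_{j+1}^\beta(x)$ directly in terms of $u_j^\beta(x)$ and its first derivative. Once such an identity is in hand, applying $D_x^{j+1}$ to both sides is mechanical, because the polynomial coefficients that arise have degree at most two, so Leibniz's rule terminates after only two or three nonvanishing terms.

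To derive the intermediate identity, I would write $\phi_j(x)=(x-1)^ju(x)$, so that $\phi_{j+1}(x)=(x-1)\phi_j(x)$ and $u_j^\beta(x)=(x+1)^{j+\beta}D_x^j\phi_j(x)$. Leibniz's rule applied to $D_x^{j+1}\lbrack(x-1)\phi_j(x)\rbrack$ yields $D_x^{j+1}\phi_{j+1}=(x-1)D_x^{j+1}\phi_j+(j+1)D_x^j\phi_j$. Substituting this into the definition of $u_{j+1}^\beta$, extracting one factor of $x+1$ from $(x+1)^{j+1+\beta}$, rewriting $(x+1)^{j+\beta}D_x^{j+1}\phi_j=D_xu_j^\beta-\tfrac{j+\beta}{x+1}u_j^\beta$ via the product rule, and finally using $(x^2-1)/(x+1)=x-1$ to collect terms, I expect to arrive at
\begin{equation*}
u_{j+1}^\beta(x)=(x^2-1)D_xu_j^\beta(x)+\lbrack(1-\beta)x+(2j+\beta+1)\rbrack u_j^\beta(x).
\end{equation*}

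Finally I would apply $D_x^{j+1}$ to this identity. Because $x^2-1$ contributes only three nonzero Leibniz terms and the linear factor only two, the result collapses to a linear combination of $D_x^{j+2}u_j^\beta$, $D_x^{j+1}u_j^\beta$ and $D_x^ju_j^\beta$ whose coefficients simplify to $x^2-1$, $2(j+1)x+(1-\beta)x+(2j+\beta+1)=(2j+3-\beta)x+(2j+\beta+1)$, and $j(j+1)+(j+1)(1-\beta)=(j+1)(j-\beta+1)$, respectively. Comparing with $L_{2,x}^{2j+1,-\beta}=(x^2-1)D_x^2+\lbrack 2j+1+\beta+(2j+3-\beta)x\rbrack D_x$ from the definition in Theorem \ref{thm4.1} and adding the scalar shift $(j+1)(j-\beta+1)$ confirms that these are precisely the coefficients of $\{L_{2,x}^{2j+1,-\beta}+(j+1)(j-\beta+1)\}D_x^ju_j^\beta$, completing the argument. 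The only delicate point is the bookkeeping of signs and powers of $x\pm 1$ in the derivation of the first-order identity above; once that identity is secure, the subsequent differentiation is routine thanks to the low degree of the coefficient polynomials.
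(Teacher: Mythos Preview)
Your proof is correct and follows essentially the same route as the paper's own argument: both first establish the first-order identity $u_{j+1}^\beta(x)=(x^2-1)D_xu_j^\beta(x)+[(1-\beta)x+2j+\beta+1]\,u_j^\beta(x)$ and then apply $D_x^{j+1}$ via Leibniz's rule, with the low-degree coefficients ensuring only three, respectively two, nonzero terms survive. The only cosmetic difference is that the paper isolates the piece $(x+1)^{j+\beta+1}(x-1)D_x^{j+1}[(x-1)^ju(x)]=(x^2-1)D_xu_j^\beta(x)-(j+\beta)(x-1)u_j^\beta(x)$ before assembling the first-order identity, whereas you reach it by rewriting $(x+1)^{j+\beta}D_x^{j+1}\phi_j$ in terms of $D_xu_j^\beta$ and $u_j^\beta/(x+1)$; the computations are equivalent.
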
	 
  \begin{proof} 
  Since
   \begin{equation*}
  (x+1)^{j+\beta+1}(x-1)D_x^{j+1} [(x-1)^j u(x)]=
  (x^2-1)D_x u_j^\beta(x)-(j+\beta)(x-1)u_j^\beta(x),
        \end{equation*}	 
   we have, recalling $L_{2,x}^{2j+1,-\beta}=(x^2-1)D_x^2+[2j+\beta+1+(2j-\beta+3)x]\,D_x$, that
   \begin{equation*}
   \begin{aligned}
    &D_x^{j+1}u_{j+1}^\beta(x)= D_x^{j+1}\big\lbrace
    (x+1)^{j+\beta+1}D_x^{j+1}[(x-1)(x-1)^j u(x)]\big\rbrace\\
    &=D_x^{j+1}\big\lbrace
    (x+1)^{j+\beta+1}(x-1)D_x^{j+1}[(x-1)^j u(x)]+(j+1)(x+1)u_j^\beta(x)\big\rbrace\\
    &=D_x^{j+1}\big\lbrace (x^2-1)D_x u_j^\beta(x) +[2j+\beta+1+(1-\beta)x] u_j^\beta(x)\big\rbrace\\
    &=(x^2-1)D_x^{j+2}u_j^\beta(x)+(j+1)2xD_x^{j+1}u_j^\beta(x)
    +(j+1)j\;D_x^ju_j^\beta(x)\\
    &\quad +[2j+\beta+1+(1-\beta)x]D_x^{j+1} u_j^\beta(x)
    +(j+1)(1-\beta)D_x^j u_j^\beta(x)\\    
    &=(x^2-1)D_x^{j+2}u_j^\beta(x)
    +[2j+\beta+1+(2j-\beta+3)x]D_x^{j+1} u_j^\beta(x)\\
     &\quad +(j+1)(j-\beta+1)D_x^ju_j^\beta(x)\\ 
    &=\big\lbrace L_{2,x}^{2j+1,-\beta}+(j+1)(j-\beta+1)\big\rbrace D_x^ju_j^\beta(x).   
    \end{aligned}
   \end{equation*} 
        \end{proof}
   \begin{corollary}
     \label{cor4.2}
    For $\alpha\in\mathbb{N}_0,\;\beta>-1$, the Jacobi-type differential operator (\ref{eq1.13}) satisfies the recurrence relation 
     \begin{equation}
     \begin{aligned}
      L_{2\alpha+4,x}^{\alpha,\beta}y(x)=&
      \big\lbrace (x^2-1)D_x^2+[2\alpha-\beta+1+(2\alpha+\beta+3)x]\,D_x-\\
      &-\frac{4(\alpha+1)}{x-1}+(\alpha+1)(\alpha+\beta+1)\big\rbrace L_{2\alpha+2,x}^{\alpha-1,\beta}y(x),
       \end{aligned}
        \label{eq4.13}
       \end{equation}
       where, for $\alpha=0$, the recurrence starts with the operator
        \begin{equation}
        L_{2,x}^{-1,\beta}y(x)=\lbrace (x^2-1)D_x^2+(\beta+1)(x-1)D_x \rbrace y(x).
        \label{eq4.14}
       \end{equation}
       Notice that  the operator (\ref{eq4.14}) is obtained as well, if we formally take $\alpha=-1$ in the definition (\ref{eq1.11}) of the Jacobi differential operator $L_{2,x}^{\alpha,\beta}$. 
   \end{corollary}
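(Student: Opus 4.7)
The plan is to read off the recurrence directly from the factorization (\ref{eq4.8}) established in Theorem \ref{thm4.1}. The key idea is to peel off the outermost factor in the product over $j = 0, 1, \dots, \alpha+1$ (namely the factor at $j = \alpha+1$) and identify the remaining product of $\alpha+1$ second-order expressions with $L_{2\alpha+2,x}^{\alpha-1,\beta}$.

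My first step would be to rewrite (\ref{eq4.8}), using the convention on the order of application specified right after Theorem \ref{thm4.1}, as
\[
L_{2\alpha+4,x}^{\alpha,\beta}y(x) = \Big\{L_{2,x}^{2\alpha+1,\beta} - \frac{4(\alpha+1)}{x-1} + (\alpha+1)(\alpha+\beta+1)\Big\} \prod_{j=0}^{\alpha}\Big\{L_{2,x}^{2j-1,\beta} - \frac{4j}{x-1} + j(j+\beta)\Big\} y(x).
\]
Substituting the explicit form $L_{2,x}^{2\alpha+1,\beta} = (x^2-1)D_x^2 + [2\alpha-\beta+1 + (2\alpha+\beta+3)x]D_x$ from the definition given in (\ref{eq4.8}) immediately recovers the outer factor of (\ref{eq4.13}).

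For $\alpha \geq 1$, a second application of Theorem \ref{thm4.1}, now with $\alpha$ replaced by $\alpha - 1 \in \mathbb{N}_0$, identifies the remaining inner product with $L_{2\alpha+2,x}^{\alpha-1,\beta}y(x)$, and (\ref{eq4.13}) follows. For the base case $\alpha = 0$, the inner product degenerates to the single factor at $j = 0$; since both $4j/(x-1)$ and $j(j+\beta)$ vanish there, it reduces to $L_{2,x}^{-1,\beta}$, and a direct computation gives $L_{2,x}^{-1,\beta}y(x) = \{(x^2-1)D_x^2 + (\beta+1)(x-1)D_x\}y(x)$, which is exactly (\ref{eq4.14}). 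A brief observation confirms that this coincides with the formal substitution $\alpha = -1$ into the definition (\ref{eq1.11}) of $L_{2,x}^{\alpha,\beta}$.

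There is no genuine obstacle: the corollary is essentially a bookkeeping consequence of Theorem \ref{thm4.1}. The only point requiring a bit of care is the correct alignment of indices when the outermost factor is detached, and the verification that the residual product matches the shape of Theorem \ref{thm4.1} with parameter $\alpha - 1$ rather than some shifted index.
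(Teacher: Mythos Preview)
Your proposal is correct and is exactly the approach the paper intends: Corollary \ref{cor4.2} is stated without a separate proof precisely because it is read off from the factorization (\ref{eq4.8}) of Theorem \ref{thm4.1} by isolating the $j=\alpha+1$ factor, with the remaining product $\prod_{j=0}^{\alpha}$ recognized as $L_{2\alpha+2,x}^{\alpha-1,\beta}$ (or, for $\alpha=0$, as the single $j=0$ term $L_{2,x}^{-1,\beta}$). Your handling of the ordering convention and the base case is accurate; there is nothing to add.
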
	
   \section{A direct verification of the Jacobi-type differential equation}
   \label{sec:5}
   
   Now we are in a position to prove, just by utilizing the representation (\ref{eq1.13}) of $ L_{2\alpha+4,x}^{\alpha,\beta}$, that the Jacobi-type polynomials $P_n^{\alpha,\beta,0,N}(x)=P_n^{\alpha,\beta}(x)+NR_n^{\alpha,\beta}(x)$,
   $n\in\mathbb{N}_{0}$, solve equation (\ref{eq1.7}). By Corollary \ref{cor4.1} and the classical Jacobi equation we already know that
   \begin{equation*}
      \lbrace L_{2\alpha+4,x}^{\alpha,\beta}-\Lambda_{2\alpha+4,n}^{\alpha,\beta}\rbrace R_n^{\alpha,\beta}(x)=0,\quad  
     \lbrace L_{2,x}^{\alpha,\beta}-\Lambda_{2,n}^{\alpha,\beta}\rbrace P_n^{\alpha,\beta}(x)=0.
      \end{equation*}  
   So it remains to show that
   \begin{equation}
   \lbrace L_{2\alpha+4,x}^{\alpha,\beta}-\Lambda_{2\alpha+4,n}^{\alpha,\beta}
   \rbrace P_n^{\alpha,\beta}(x)+C_{\alpha,\beta}  
    \lbrace L_{2,x}^{\alpha,\beta}-\Lambda_{2,n}^{\alpha,\beta}\rbrace R_n^{\alpha,\beta}(x)=0,\;-1<x<1.
    \label{eq5.1}
    \end{equation}       
    This is obtained by combining the following two identities, of which the latter one turns out to be crucial here. 
     \begin{theorem}
      \label{thm5.1}
       For all $n \in \mathbb{N}$, there hold
      \begin{equation}
      (i) \quad 
        C_{\alpha,\beta} \big\lbrace L_{2,x}^{\alpha,\beta}-\Lambda_{2,n}^{\alpha,\beta}\big\rbrace R_n^{\alpha,\beta}(x)=\Lambda_{2\alpha+4,n}^{\alpha,\beta}
        \frac {(\alpha+1)(\alpha+2)}{n(n+\alpha+1)}  P_{n-1}^{\alpha+2,\beta}(x),
         \label{eq5.2}
         \end{equation}  
      \begin{equation}
       (ii) \quad 
        L_{2\alpha+4,x}^{\alpha,\beta}P_n^{\alpha,\beta}(x)=
        \Lambda_{2\alpha+4,n}^{\alpha,\beta}\big\lbrace
        P_n^{\alpha,\beta}(x)-\frac {(\alpha+1)(\alpha+2)}{n(n+\alpha+1)}  P_{n-1}^{\alpha+2,\beta}(x)\big\rbrace.
         \label{eq5.3}
       \end{equation}
         \end{theorem}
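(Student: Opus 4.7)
My approach is to prove (i) and (ii) separately: (i) will follow immediately from the conjugation identity (\ref{eq4.2}), while (ii) requires the new representation (\ref{eq1.13}) together with a pair of classical Jacobi polynomial identities derived from Rodrigues' formula.

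For (i), I would substitute $R_n^{\alpha,\beta}(x) = A_n^{\alpha,\beta}(x-1) P_{n-1}^{\alpha+2,\beta}(x)$ into identity (\ref{eq4.2}) with $j=0$ and $u(x) = P_{n-1}^{\alpha+2,\beta}(x)$. Since $u$ satisfies the Jacobi equation $L_{2,x}^{\alpha+2,\beta} u = (n-1)(n+\alpha+\beta+2) u$, the bracket on the right of (\ref{eq4.2}) telescopes: $\{L_{2,x}^{\alpha+2,\beta} + (\alpha+\beta+2)\} u = [(n-1)(n+\alpha+\beta+2) + (\alpha+\beta+2)] u = n(n+\alpha+\beta+1) u = \Lambda_{2,n}^{\alpha,\beta} u$. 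Transposing and rearranging yields $\{L_{2,x}^{\alpha,\beta} - \Lambda_{2,n}^{\alpha,\beta}\}[(x-1) P_{n-1}^{\alpha+2,\beta}] = 2(\alpha+1) P_{n-1}^{\alpha+2,\beta}$. Multiplying by $A_n^{\alpha,\beta} C_{\alpha,\beta}$ and simplifying the Pochhammer products produces the prefactor $\Lambda_{2\alpha+4,n}^{\alpha,\beta}(\alpha+1)(\alpha+2)/[n(n+\alpha+1)]$ claimed in (\ref{eq5.2}).

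For (ii), I would apply the representation (\ref{eq1.13}) directly to $P_n^{\alpha,\beta}$. To compute the inner derivative, I write $(x-1)^{\alpha+1} P_n^{\alpha,\beta} = (x-1) \cdot h(x)$ with $h(x) = (x-1)^\alpha P_n^{\alpha,\beta}(x)$, and use Leibniz: $D_x^{\alpha+2}[(x-1) h] = (x-1) D_x^{\alpha+2} h + (\alpha+2) D_x^{\alpha+1} h$. Since $h$ has matching parameters, iterating (\ref{eq4.5}) $\alpha$ times gives $D_x^\alpha h = (n+1)_\alpha P_n^{0,\alpha+\beta}$, and two further differentiations using $D_x P_n^{\gamma,\delta} = \frac{n+\gamma+\delta+1}{2} P_{n-1}^{\gamma+1,\delta+1}$ produce a combination of $P_{n-1}^{1,\alpha+\beta+1}$ and $P_{n-2}^{2,\alpha+\beta+2}$. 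Multiplying by $(x+1)^{\alpha+\beta+2}$ and applying the outer $D_x^{\alpha+2}$ (iterating (\ref{eq4.6}) with matching parameters after handling any extra $(x \pm 1)$ factors via Leibniz), then multiplying by $(x-1)/(x+1)^\beta$, yields $L_{2\alpha+4,x}^{\alpha,\beta} P_n^{\alpha,\beta}$ as a linear combination of three terms of the form $(x-1) P_{n-1}^{\alpha+3,\beta}$, $(x-1)(x+1) P_{n-2}^{\alpha+3,\beta+1}$, and $(x-1) P_{n-1}^{\alpha+2,\beta}$.

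The main obstacle is the final collapse of this three-term combination to the target form. I would use two contiguous relations for classical Jacobi polynomials, each derivable from Rodrigues' formula. The first, $P_n^{\alpha,\beta}(x) = P_n^{\alpha+1,\beta}(x) - \frac{x+1}{2} P_{n-1}^{\alpha+1,\beta+1}(x)$, applied with $\alpha \to \alpha+2$ and $n \to n-1$, rewrites $(x+1) P_{n-2}^{\alpha+3,\beta+1}$ as $2[P_{n-1}^{\alpha+3,\beta} - P_{n-1}^{\alpha+2,\beta}]$, eliminating the middle term. The second, $2n P_n^{\alpha,\beta}(x) = (n+\alpha)(x+1) P_{n-1}^{\alpha,\beta+1}(x) + (n+\beta)(x-1) P_{n-1}^{\alpha+1,\beta}(x)$ (obtained by differentiating the Rodrigues product once before applying Rodrigues again), combined with the first identity produces
\begin{equation*}
2(\alpha+1) P_n^{\alpha+1,\beta}(x) + (n+\beta)(x-1) P_{n-1}^{\alpha+2,\beta}(x) = 2(n+\alpha+1) P_n^{\alpha,\beta}(x).
\end{equation*}
Substituting this into the expression for $L_{2\alpha+4,x}^{\alpha,\beta} P_n^{\alpha,\beta}$ and gathering the Pochhammer factors (using $n (n+1)_\alpha (n+\alpha+1) = (n)_{\alpha+2}$) produces $\Lambda_{2\alpha+4,n}^{\alpha,\beta}\{P_n^{\alpha,\beta} - \frac{(\alpha+1)(\alpha+2)}{n(n+\alpha+1)} P_{n-1}^{\alpha+2,\beta}\}$, establishing (ii). Combining (i) and (ii) then yields (\ref{eq5.1}).
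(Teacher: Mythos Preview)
Your treatment of part (i) is correct and coincides with the paper's: both use (\ref{eq4.2}) at $j=0$ together with the Jacobi equation for $P_{n-1}^{\alpha+2,\beta}$ to obtain $\{L_{2,x}^{\alpha,\beta}-\Lambda_{2,n}^{\alpha,\beta}\}R_n^{\alpha,\beta}=2(\alpha+1)A_n^{\alpha,\beta}P_{n-1}^{\alpha+2,\beta}$, followed by the Pochhammer bookkeeping.

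For part (ii), your overall strategy---apply (\ref{eq1.13}) directly and reduce via contiguous relations---is the paper's as well, but your execution deviates and contains a concrete error. With your Leibniz split $(x-1)^{\alpha+1}P_n^{\alpha,\beta}=(x-1)h$, the inner derivative indeed gives a combination of $(x-1)P_{n-2}^{2,\alpha+\beta+2}$ and $P_{n-1}^{1,\alpha+\beta+1}$. But after multiplying by $(x+1)^{\alpha+\beta+2}$ and performing the outer $D_x^{\alpha+2}$ (Leibniz on the stray $(x-1)$ in the first piece and on the extra $(x+1)$ in the second), you obtain \emph{four} terms, namely multiples of $(x-1)^2 P_{n-2}^{\alpha+4,\beta}$, $(x-1)(x+1)P_{n-2}^{\alpha+3,\beta+1}$, $(x-1)P_{n-1}^{\alpha+3,\beta-1}$, and $(x-1)P_{n-1}^{\alpha+2,\beta}$. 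No term $(x-1)P_{n-1}^{\alpha+3,\beta}$ appears, so the two contiguous relations you propose do not directly collapse the expression; more reduction steps are needed and the outline as written does not close.

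The paper avoids this by inserting the contiguous relation \emph{before} differentiating: writing $P_n^{\alpha,\beta}=P_n^{\alpha+1,\beta-1}-P_{n-1}^{\alpha+1,\beta}$ via (\ref{eq5.5}) makes the first Jacobi parameter equal to the exponent $\alpha+1$, so (\ref{eq4.5}) iterates $\alpha+1$ times without Leibniz; one further use of (\ref{eq5.5}) then forces the second parameter to $\alpha+\beta+2$, so (\ref{eq4.6}) iterates cleanly as well. This yields only two intermediate terms, $\tfrac{x-1}{2}P_{n-1}^{\alpha+2,\beta}$ and $\tfrac{x-1}{2}P_{n-2}^{\alpha+3,\beta}$, which (\ref{eq5.7}) and (\ref{eq5.6}) reduce in one step to the right-hand side of (\ref{eq5.3}). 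Your route can be salvaged, but as stated the intermediate bookkeeping is off and the reduction is not the short one you describe.
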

    \begin{proof} 
    (i) Applying identity (\ref{eq4.2}) for $j=0$ as well as the Jacobi equation with increased parameter $\alpha+2$, we find that
     \begin{equation*}
      \big\lbrace L_{2,x}^{\alpha,\beta}-\frac{2(\alpha+1)}{x-1}-
      \Lambda_{2,n}^{\alpha,\beta}\big\rbrace \lbrack (x-1)P_{n-1}^{\alpha+2,\beta}(x)\rbrack
      =(x-1)\big\lbrace L_{2,x}^{\alpha+2,\beta}-\Lambda_{2,n-1}^{\alpha+2,\beta}\big\rbrace
       P_{n-1}^{\alpha+2,\beta}(x)=0. 
      \end{equation*}  
    So, by definition (\ref{eq1.4}) of $R_n^{\alpha,\beta}(x)$, 
     \begin{equation*}
      \big\lbrace L_{2,x}^{\alpha,\beta}-\Lambda_{2,n}^{\alpha,\beta}\big\rbrace R_n^{\alpha,\beta}(x)=\frac{2(\alpha+1)}{x-1}R_n^{\alpha,\beta}(x)=
      \frac{(\alpha+1)_n(\alpha+\beta+2)_n}{n!\;(\beta+1)_{n-1}}
      P_{n-1}^{\alpha+2,\beta}(x).
      \end{equation*}  
    Identity (\ref{eq5.2}) then follows because of 
     \begin{equation*}
     (\alpha+2)!\;(\beta+1)_{\alpha+1}
     \frac{(\alpha+1)_n(\alpha+\beta+2)_n}{n!\;(\beta+1)_{n-1}}
     =(n)_{\alpha+2}(n+\beta)_{\alpha+2}
          \frac{(\alpha+1)(\alpha+2)}{n(n+\alpha+1)}.
      \end{equation*}  
     (ii) In order to carry out the differentiations in 
     \begin{equation}
      L_{2\alpha+4,x}^{\alpha,\beta}P_n^{\alpha,\beta}(x)=
      \frac{x-1}{(x+1)^\beta}D_x^{\alpha+2}\big\lbrace(x+1)^{\alpha+\beta+2}D_x^{\alpha+2}\lbrack(x-1)^{\alpha+1}P_n^{\alpha,\beta}(x)\rbrack\big\rbrace,
      \label{eq5.4}
       \end{equation}   
      we cannot use the formulas (\ref{eq4.5}), (\ref{eq4.6}) right away as in (\ref{eq4.7}). Instead, we first adjust the parameters of the Jacobi polynomials by employing the well-known identities 
     \begin{equation}
      P_n^{\alpha,\beta}(x)=P_n^{\alpha+1,\beta-1}(x)-P_{n-1}^{\alpha+1,\beta}(x)
      \label{eq5.5}
      \end{equation}   
     \begin{equation}
      (2n+\alpha+\beta+1)P_n^{\alpha,\beta}(x)= (n+\alpha+\beta+1)P_n^{\alpha+1,\beta}(x)-
       (n+\beta)P_{n-1}^{\alpha+1,\beta}(x)
      \label{eq5.6}
      \end{equation}   
      \begin{equation}
       (2n+\alpha+\beta+2)\frac{1-x}{2}P_n^{\alpha+1,\beta}(x)=    (n+\alpha+1)P_n^{\alpha,\beta}(x)-
       (n+1)P_{n+1}^{\alpha,\beta}(x)
        \label{eq5.7}
       \end{equation}   
        \begin{equation}
        D_xP_n^{\gamma,\delta}(x)=\frac{1}{2}(n+\gamma+\delta+1)
        P_{n-1}^{\gamma+1,\delta+1}(x).
        \label{eq5.8}
        \end{equation}
       In fact, using (\ref{eq5.5}), (\ref{eq4.6}), (\ref{eq5.8}), and (\ref{eq5.5}) again, we obtain
       \begin{equation*}
        \begin{aligned}
        D_x^{\alpha+2}&\lbrack(x-1)^{\alpha+1}P_n^{\alpha,\beta}(x)\rbrack\\
        &=D_x^{\alpha+2}\lbrack(x-1)^{\alpha+1}P_n^{\alpha+1,\beta-1}(x)\rbrack
        -D_x^{\alpha+2}\lbrack(x-1)^{\alpha+1}P_{n-1}^{\alpha+1,\beta}(x)\rbrack\\  
        &=D_x\lbrack(n+1)_{\alpha+1}P_n^{0,\alpha+\beta}(x)\rbrack
                -D_x\lbrack(n)_{\alpha+1}P_{n-1}^{0,\alpha+\beta+1}(x)\rbrack\\    
        &=\frac{1}{2}(n+\alpha+\beta+1)\big\lbrack(n+1)_{\alpha+1}P_{n-1}^{1,\alpha+\beta+1}(x)-(n)_{\alpha+1}P_{n-2}^{1,\alpha+\beta+2}(x)\rbrack\\  
        &=\frac{1}{2}(n+\alpha+\beta+1)(n+1)_{\alpha}\big\lbrack (n+\alpha+1)P_{n-1}^{0,\alpha+\beta+2}(x)                               +(\alpha+1)P_{n-2}^{1,\alpha+\beta+2}(x)\big\rbrack. 
        \end{aligned}
        \end{equation*}
        Inserting this last identity into the right-hand side of (\ref{eq5.4}) and observing that, in view of (\ref{eq4.6}),
        \begin{equation*}
         \begin{aligned}
          D_x^{\alpha+2}\big\lbrack(x+1)^{\alpha+\beta+2}
          P_{n-1}^{0,\alpha+\beta+2}(x)\big\rbrack &=(n+\beta)_{\alpha+2}(x+1)^\beta
          P_{n-1}^{\alpha+2,\beta}(x),\\
         D_x^{\alpha+2}\big\lbrack(x+1)^{\alpha+\beta+2}
            P_{n-2}^{1,\alpha+\beta+2}(x)\big\rbrack &=(n+\beta-1)_{\alpha+2}(x+1)^\beta
            P_{n-2}^{\alpha+3,\beta}(x),
          \end{aligned}
          \end{equation*}
        we arrive at
        \begin{equation*}
        \begin{aligned}
         L_{2\alpha+4,x}^{\alpha,\beta}&P_n^{\alpha,\beta}(x)=
         (n)_{\alpha+2}(n+\beta)_{\alpha+2}\;\cdot\\
          &\cdot\big\lbrace \frac{n+\alpha+\beta+1}{n}\;\frac{x-1}{2} 
          P_{n-1}^{\alpha+2,\beta}(x)+\frac{(\alpha+1)(n+\beta-1)}{n(n+\alpha+1)}
         \;\frac{x-1}{2}P_{n-2}^{\alpha+3,\beta}(x)\big\rbrace.         
        \end{aligned}
        \end{equation*}
       Finally, due to (\ref{eq5.7}) and (\ref{eq5.6}), the two terms in curly brackets simplify to
       \begin{equation*}
       \begin{aligned}
       &\frac{n+\alpha+\beta+1}{n}\;\frac{x-1}{2} 
         P_{n-1}^{\alpha+2,\beta}(x)=P_n^{\alpha,\beta}(x)
         -\frac{\alpha+1}{n}P_{n-1}^{\alpha+1,\beta}(x),\\
       &\frac{(\alpha+1)(n+\beta-1)}{n(n+\alpha+1)}\;\frac{x-1}{2} 
       P_{n-2}^{\alpha+3,\beta}(x)=\frac{\alpha+1}{n}\big\lbrack P_{n-1}^{\alpha+1,\beta}(x)-\frac{\alpha+2}{n+\alpha+1}
       P_{n-1}^{\alpha+2,\beta}(x)\big\rbrack.
        \end{aligned}
        \end{equation*}   
        This proves the required identity (\ref{eq5.3}). 
        \end{proof} 
 \section{Some relations between the equations of Jacobi- and
  ultraspherical-type}
  \label{sec:6}
   
   We first observe a surprisingly simple connection between the higher-order differential expressions (\ref{eq1.13}) and (\ref{eq1.19}). 
   \begin{theorem}
   \label{thm6.1}
   The symmetric ultraspherical-type operator  $L_{2\alpha+4,x}$ and the Jacobi-type operator $L_{2\alpha+4,x}^{\alpha,\alpha+2}$ , both of the same order $2\alpha+4,\;\alpha \in \mathbb{N}_0$, are linked to each other via   
   \begin{equation}
    L_{2\alpha+4,x}[(x+1)y(x)]=(x+1)L_{2\alpha+4,x}^{\alpha,\alpha+2}y(x).
     \label{eq6.1}
    \end{equation}  
    \end{theorem}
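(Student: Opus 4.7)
The plan is to expand both sides of (\ref{eq6.1}) using the explicit definitions (\ref{eq1.19}) of $L_{2\alpha+4,x}$ and (\ref{eq1.13}) of $L_{2\alpha+4,x}^{\alpha,\beta}$ at $\beta=\alpha+2$, and then to reduce the claim to a purely algebraic identity involving powers of $(x+1)$ and iterated differentiation, free of any reference to $y$.

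Concretely, the left-hand side becomes
\begin{equation*}
L_{2\alpha+4,x}[(x+1)y(x)]=(x^2-1)D_x^{2\alpha+4}\bigl[(x-1)^{\alpha+1}(x+1)^{\alpha+2}y(x)\bigr],
\end{equation*}
while (\ref{eq1.13}) with $\beta=\alpha+2$ gives
\begin{equation*}
(x+1)L_{2\alpha+4,x}^{\alpha,\alpha+2}y(x)=\frac{x^2-1}{(x+1)^{\alpha+2}}D_x^{\alpha+2}\bigl\{(x+1)^{2\alpha+4}D_x^{\alpha+2}[(x-1)^{\alpha+1}y(x)]\bigr\}.
\end{equation*}
Dividing through by $(x^2-1)$ and writing $f(x)=(x-1)^{\alpha+1}y(x)$, the theorem reduces to the assertion, valid for any sufficiently smooth $f$ and with $m=\alpha+2$,
\begin{equation*}
(x+1)^{m}D_x^{2m}\bigl[(x+1)^{m}f(x)\bigr]=D_x^{m}\bigl\{(x+1)^{2m}D_x^{m}f(x)\bigr\}.
\end{equation*}

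To establish this reduced identity, I would apply the Leibniz rule on each side. Setting $g(x)=D_x^{m}f(x)$ on the right and noting that $D_x^{2m-j}f=D_x^{m-j}g$, both sides expand into a finite sum over $j$ (or $k$) from $0$ to $m$ of terms proportional to $(x+1)^{2m-j}D_x^{m-j}g$. A short check shows that the combinatorial coefficients on both sides agree, since
\begin{equation*}
\binom{2m}{j}\frac{m!}{(m-j)!}=\binom{m}{j}\frac{(2m)!}{(2m-j)!}=\frac{(2m)!\,m!}{j!\,(m-j)!\,(2m-j)!}.
\end{equation*}

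Since the pure binomial identity is elementary, the real work is simply recognizing the factorization that eliminates $(x-1)^{\alpha+1}y(x)$ as a black-box factor $f$ and cancels the outer $(x^2-1)$; this is the only nontrivial step, and the main obstacle will be bookkeeping in the algebraic rearrangement that brings the two representations into the common form above.
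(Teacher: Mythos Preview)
Your proof is correct and follows essentially the same route as the paper: both reduce the claim (after cancelling $(x^2-1)$ and writing $\phi(x)=(x-1)^{\alpha+1}y(x)$) to the key identity $D_x^{m}\{(x+1)^{2m}D_x^{m}\phi\}=(x+1)^{m}D_x^{2m}[(x+1)^{m}\phi]$ with $m=\alpha+2$, and both establish this by expanding each side with Leibniz and matching the binomial coefficients. The only cosmetic difference is that the paper starts from the right-hand side of (\ref{eq6.1}) and works toward the left, whereas you set up both sides in parallel before isolating the common identity.
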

  \begin{proof} 
 For any smooth function $\phi(x)$ and $m \in \mathbb{N}_0$, we have
 \begin{equation}
  \begin{aligned}
  D_x^m[(x+1)^{2m}D_x^m\phi(x)]&=\sum_{k=0}^{m}\binom{m}{k} 
  D_x^k[(x+1)^{2m}]D_x^{m-k}D_x^m\phi(x)\\
  &=(x+1)^m\sum_{k=0}^{m}\binom{m}{k}\binom{2m}{k}k!\;(x+1)^{m-k} D_x^{2m-k}\phi(x)\\ 
  &=(x+1)^m\sum_{k=0}^{m}\binom{2m}{k}D_x^k[(x+1)^m]D_x^{2m-k}\phi(x)\\ 
  &=(x+1)^m D_x^{2m}[(x+1)^m\phi(x)].      
  \end{aligned}
   \label{eq6.2}
   \end{equation}   
 So, starting with the Jacobi-type differential expression on the right-hand side of (\ref{eq6.1}) and choosing $m=\alpha+2$ and $\phi(x)=(x-1)^{\alpha+1}y(x)$ in identity (\ref{eq6.2}), we find that
 \begin{equation*}
   \begin{aligned}
   L_{2\alpha+4,x}^{\alpha,\alpha+2}y(x)&=
   (x-1)(x+1)^{-\alpha-2}
   D_x^{\alpha+2}\big\lbrack(x+1)^{2\alpha+4}D_x^{\alpha+2}\phi(x)\big\rbrack\\
   &=(x-1)D_x^{2\alpha+4}\big\lbrack(x+1)^{\alpha+2}(x-1)^{\alpha+1}y(x)
   \big\rbrack\\
   &=(x+1)^{-1}L_{2\alpha+4,x}[(x+1)y(x)].
   \end{aligned}
   \end{equation*}  
    \end{proof}
  \begin{remark}
  Identity (\ref{eq6.1}) follows as well by comparing the Jacobi-type factorization formula (\ref{eq4.1}) in case $\beta=\alpha+2$  with the factorized ultraspherical-type differential expression given in \cite[Sec.4-6]{Ma}. Indeed,
  \begin{equation*}
     \begin{aligned}
     (x+1)&L_{2\alpha+4,x}^{\alpha,\alpha+2}[(x-1)u(x)]=
     (x^2-1)\prod_{j=0}^{\alpha+1}\big\lbrack L_{2,x}^{\alpha+2,\alpha+2}+(j+1)(2\alpha+4-j)\big\rbrack u(x)\\
     &=(x^2-1)\prod_{j=0}^{\alpha+1}\big\lbrack (x^2-1)D_x^2+(2\alpha+6)x\;D_x+(j+1)(2\alpha+4-j)\big\rbrack u(x)\\
     &=(x^2-1)D_x^{2\alpha+4}\big\lbrack (x^2-1)^{\alpha+2}u(x)\big\rbrack=
     L_{2\alpha+4,x}[(x^2-1)u(x)].
      \end{aligned}
      \end{equation*}  
 \end{remark}   
   
   What the differential equations are concerned, a second, even more striking relationship is suggested by the two quadratic transformations due to Koornwinder \cite[(4.6-7)]{Ko},
   \begin{equation}
   P_{2n}^{\alpha,\alpha,N,N}(x)=p_nP_n^{\alpha,-1/2,0,2N}(2x^2-1),\;
   p_n=P_{2n}^{\alpha,\alpha,N,N}(1)/P_n^{\alpha,-1/2,0,2N}(1),
   \label{eq6.3}
   \end{equation}   
    \begin{equation}
    P_{2n+1}^{\alpha,\alpha,N,N}(x)=q_nx\;P_n^{\alpha,1/2,0,(4\alpha+6)N}(2x^2-1),\;q_n=P_{2n+1}^{\alpha,\alpha,N,N}(1)/P_n^{\alpha,1/2,0,(4\alpha+6)N}(1).
    \label{eq6.4}
    \end{equation}
  \\
    \begin{theorem}
    \label{thm6.2}
    Let $\alpha \in \mathbb{N}_0$, $N>0$. 
    \begin{itemize}
      \item[$(i)$]
   Considering  the differential equation (\ref{eq1.7}) associated with the Jacobi-type polynomials $y_n(x)=p_nP_n^{\alpha,-1/2,0,2N}(x)$, $n \in \mathbb{N}_0$, the substitution $x=2\xi^2-1$, $0 \le \xi \le 1$, leads to the equation (\ref{eq1.18}) for the even ultraspherical-type polynomials
    \begin{equation*}
    u_n(\xi):=y_n(2\xi^2-1)=P_{2n}^{\alpha,\alpha,N,N}(\xi).
     \end{equation*}
   \item[$(ii)$]  
   Under the same substitution as in part $(i)$ and a transformation of the dependent variable, the equation (\ref{eq1.7}) for the Jacobi-type polynomials 
   $y_n(x)=q_nP_n^{\alpha,1/2,0,(4\alpha+6)N}(x)$, $n \in \mathbb{N}_0$, reduces to the equation (\ref{eq1.18}) for the odd ultraspherical-type polynomials
    \begin{equation*}
        v_n(\xi)=\xi\;y_n(2\xi^2-1)=P_{2n+1}^{\alpha,\alpha,N,N}(\xi).
        \end{equation*}
     \end{itemize}
  \end{theorem}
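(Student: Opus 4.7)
For part (i), I treat $x=2\xi^2-1$ as a change of variables in equation (\ref{eq1.7}), taken with $\beta=-\tfrac12$ and mass constant $2N$. Writing $u(\xi):=y(2\xi^2-1)$, I observe that $u$ depends on $\xi$ only through $z:=\xi^2=(x+1)/2$, so that $D_x=\tfrac12 D_z$ while $D_\xi=2\xi D_z$. A direct chain-rule calculation then yields
\[
L_{2,x}^{\alpha,-1/2}y(x)=\tfrac14\,L_{2,\xi}^{\alpha,\alpha}u(\xi).
\]
The main step is the corresponding higher-order identity $L_{2\alpha+4,x}^{\alpha,-1/2}y(x)=4^{-\alpha-2}L_{2\alpha+4,\xi}u(\xi)$. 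Using the representations (\ref{eq1.13}) and (\ref{eq1.19}) and writing $(x-1)^{\alpha+1}y(x)=2^{\alpha+1}g(z)$, this reduces to the pure operator identity
\[
D_\xi^{2n}g(z)=4^n\,\xi\,D_z^n\!\bigl[z^{n-1/2}D_z^n g(z)\bigr]\qquad(z=\xi^2),
\]
needed at $n=\alpha+2$.

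I would prove the lemma uniformly via the Euler operator $\theta:=zD_z$. A short computation gives $D_\xi^2=4z^{-1}\theta(\theta-\tfrac12)$ on functions of $z$; combined with the commutation rule $f(\theta)z^c=z^c f(\theta+c)$ and induction, this yields
\[
D_\xi^{2n}=4^n\,z^{-n}\prod_{k=0}^{n-1}(\theta-k)(\theta-k-\tfrac12).
\]
On the other hand, using $z^n D_z^n=\prod_{j=0}^{n-1}(\theta-j)$ and $(\theta-k)z^{-1/2}=z^{-1/2}(\theta-k-\tfrac12)$, the right-hand side of the lemma unwinds to exactly the same expression. Completing part (i) is then bookkeeping: $\Lambda_{2\alpha+4,n}^{\alpha,-1/2}=4^{-\alpha-2}\Lambda_{2\alpha+4,2n}$ and $\Lambda_{2,n}^{\alpha,-1/2}=\tfrac14\Lambda_{2,2n}^{\alpha,\alpha}$ both follow from $4(n+k)(n+k-\tfrac12)=(2n+2k)(2n+2k-1)$, and $4^{\alpha+1}C_{\alpha,-1/2}=2C_\alpha$ is a short manipulation of $(1/2)_{\alpha+1}$. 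Multiplying the transformed (\ref{eq1.7}) by $4^{\alpha+2}/2$ produces (\ref{eq1.18}) at index $2n$, and (\ref{eq6.3}) identifies the polynomial eigensolution as $P_{2n}^{\alpha,\alpha,N,N}(\xi)$.

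For part (ii) the strategy is identical, but with $\beta=\tfrac12$, mass constant $(4\alpha+6)N$, and the further dependent-variable substitution $v(\xi):=\xi\,u(\xi)$, so $u=v/\xi$ with $u$ still a function of $z$. An analogous chain-rule calculation now gives $L_{2,x}^{\alpha,1/2}y(x)=\tfrac{1}{4\xi}\{L_{2,\xi}^{\alpha,\alpha}v(\xi)-2(\alpha+1)v(\xi)\}$, while the higher-order part reduces via the same $\theta$-calculus to the sibling identity $D_\xi^{2n}[\xi g(z)]=4^n D_z^n[z^{n+1/2}D_z^n g(z)]$, whose proof is line-for-line identical (with $z^{+1/2}$ now shifting $\theta$ by $+\tfrac12$ instead of $-\tfrac12$). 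The constant shift $-2(\alpha+1)$ is absorbed by $2(\alpha+1)+4\Lambda_{2,n}^{\alpha,1/2}=\Lambda_{2,2n+1}^{\alpha,\alpha}$; no shift is needed at the higher order since $4^{\alpha+2}\Lambda_{2\alpha+4,n}^{\alpha,1/2}=\Lambda_{2\alpha+4,2n+1}$ holds exactly; and the factor $(4\alpha+6)$ is precisely what makes $C_{\alpha,1/2}\cdot 4^{\alpha+1}/(4\alpha+6)=C_\alpha$, matching the coupling in (\ref{eq1.18}) at index $2n+1$. The main obstacle throughout is the uniform bookkeeping of the half-integer weights $(x+1)^{\beta+1/2}$ through the quadratic substitution, which the $\theta$-operator formulation handles in a single stroke.
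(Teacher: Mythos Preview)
Your proof is correct and follows the same overall architecture as the paper: reduce (\ref{eq1.7}) under $x=2\xi^2-1$ to (\ref{eq1.18}) by matching the constants and establishing one operator identity for each of the two differential expressions. The constant checks and the second-order identities are identical to the paper's.

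The only genuine methodological difference lies in the proof of the key higher-order identity. Your lemma $D_\xi^{2n}g(z)=4^n\xi D_z^n[z^{n-1/2}D_z^n g(z)]$ is exactly the paper's identity $\xi\,\delta_\xi^{\alpha+2}[\xi^{2\alpha+3}\delta_\xi^{\alpha+2}\phi]=D_\xi^{2\alpha+4}\phi$ rewritten via $\delta_\xi=2D_z$ on functions of $z=\xi^2$ (and likewise for the sibling identity in part~(ii)). The paper derives this from a formula for iterated Bessel derivatives $\delta_\xi^m[\xi^{2m+1}\delta_\xi^{m+1}\phi]=D_\xi^{2m+1}\phi$, quoted from \cite{EM1} and proved there by a ``lengthy but straightforward induction''. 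Your route through the Euler operator $\theta=zD_z$, the factorization $D_\xi^2=4z^{-1}\theta(\theta-\tfrac12)$, and the commutation rule $f(\theta)z^c=z^cf(\theta+c)$ is self-contained and more transparently algebraic; it also makes the parallel between the $\beta=-\tfrac12$ and $\beta=\tfrac12$ cases (shift by $-\tfrac12$ versus $+\tfrac12$) immediately visible. The paper's formulation, on the other hand, works for arbitrary smooth $\phi(\xi)$ rather than just functions of $\xi^2$, though only the latter case is actually needed here.
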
  
   \begin{proof} 
 (i) We multiply equation (\ref{eq1.7}) in the case that $y_n(x)=p_nP_n^{\alpha,-1/2,0,2N}(x)$, i.e.
  \begin{equation}
   2N\{L_{2\alpha+4,x}^{\alpha,-1/2}-\Lambda_{2\alpha+4,n}^{\alpha,-1/2}\}y(x)
     +C_{\alpha,-1/2}\{L_{2,x}^{\alpha,-1/2}-\Lambda_{2,n}^{\alpha,-1/2}\}y_nx)=0,\;-1<x<1,
   \label{eq6.5}
   \end{equation}  
   by $2^{2\alpha+3}$  and observe that the constants in (\ref{eq1.10}) and (\ref{eq1.17}) are related to each other by 
   \begin{equation*}
      2^{2\alpha+4}\Lambda_{2\alpha+4,n}^{\alpha,-1/2}=
      \Lambda_{2\alpha+4,2n},\; 2^{2\alpha+1}C_{\alpha,-1/2}=C_\alpha,\;
      4\Lambda_{2,n}^{\alpha,-1/2}=\Lambda_{2,2n}^{\alpha,\alpha}.
     \end{equation*}
   Concerning the two differential expressions in (\ref{eq6.5}) we formally replace $D_x$ by $(4\xi)^{-1}D_\xi$ in view of the substitution $x=2\xi^2-1$.  Years ago, we used already the so-called Bessel derivate $\delta_\xi=\xi^{-1}D_\xi$ in order to define the Bessel-type functions via a confluent limit of the Laguerre-type polynomials, see \cite{EM1}. Now we can use the expansion formula for the iterated Bessel derivatives $\delta_\xi^{m+1},\;m \in \mathbb{N}_0$, as stated in \cite[(2.8)]{EM1}. A quite lengthy, but straightforward induction argument then shows that for all $j=0.\dots,m$ and any smooth function $\phi(\xi)$,
   	\begin{equation*}
   	\delta_\xi^j\lbrack \xi^{2m+1} \delta_\xi^{m+1}\phi(\xi) \rbrack =
   	\sum_{k=j}^{m} \frac{(-2)^{k-m}(2m-k-j)!}{(m-k)!\;(k-j)!} \xi^{k-j}\;D_\xi^{k+j+1} \phi(\xi). 
    \end{equation*} 
   	Hence, for $j=m$, we end up with the surprisingly simple identity needed in the following,
   	\begin{equation}
    \delta_\xi^m\lbrack \xi^{2m+1} \delta_\xi^{m+1}\phi(\xi) \rbrack =
   	 D_\xi^{2m+1}\phi(\xi),\;m \in \mathbb{N}_0.
   	  \label{eq6.6}
     \end{equation}
   	In fact, choosing $m=\alpha+1$  and taking into account that $\xi\; \delta_\xi^{\alpha+2}=D_\xi\delta_\xi^{\alpha+1}$ we have
   	\begin{equation*}
     \xi\;\delta_\xi^{\alpha+2}\lbrack \xi^{2\alpha+3}\delta_\xi^{\alpha+2}\phi(\xi) \rbrack =
      	D_\xi D_\xi^{2\alpha+3}\phi(\xi)=D_\xi^{2\alpha+4}\phi(\xi)    
      \end{equation*}	
   and thus
   \begin{equation*}
     \begin{aligned}
   2^{2\alpha+4} L_{2\alpha+4,x}^{\alpha,-1/2} y_n(x)&=2^{2\alpha+4}
   \frac{x-1}{(x+1)^{-1/2}}D_x^{\alpha+2}\big\lbrace(x+1)^{\alpha+3/2}
   D_x^{\alpha+2}\lbrack (x-1)^{\alpha+1}y_n(x)\rbrack\big\rbrace\\
   &=(\xi^2-1)\;\xi\;\delta_\xi^{\alpha+2}\big\lbrace \xi^{2\alpha+3}\delta_\xi^{\alpha+2}\lbrack (\xi^2-1)^{\alpha+1}y_n(2\xi^2-1)\rbrack\big\rbrace\\
   &=(\xi^2-1)D_\xi^{2\alpha+4}\lbrack(\xi^2-1)^{\alpha+1}u_n(\xi)\rbrack\\ 
   &=L_{2\alpha+4,\xi}u_n(\xi).
    \end{aligned}
    \end{equation*}
   Moreover, it is not hard to see that
   \begin{equation*}
   \begin{aligned}
  4 L_{2,x}^{\alpha,-1/2} y_n(x)&=4\big\lbrace (x^2-1)D_x^2+\lbrack\alpha+1/2+(\alpha+3/2)x\rbrack \,D_x\big\rbrace y_n(x)\\
  &=\big\lbrace (\xi^2-1)D_\xi^2+2(\alpha+1)\xi\,D_\xi\big\rbrace y_n(2\xi^2-1)\\
  &=  L_{2,\xi}^{\alpha,\alpha}u_n(\xi).
     \end{aligned}
     \end{equation*}
   Putting all parts together, we arrive at equation (\ref{eq1.18}) applied to $u_n(\xi)=P_{2n}^{\alpha,\alpha,N,N}(\xi)$.\\
   (ii) In case of the Jacobi-type polynomials $y_n(x)=q_nP_n^{\alpha,1/2,0,(4\alpha+6)N}(x)$, equation (\ref{eq1.7}) reads
   \begin{equation}
    (4\alpha+6)N\{L_{2\alpha+4,x}^{\alpha,1/2}-\Lambda_{2\alpha+4,n}^{\alpha,1/2}\}
    y_n(x)+C_{\alpha,1/2}\{L_{2,x}^{\alpha,1/2}-\Lambda_{2,n}^{\alpha,1/2}\}
    y_n(x)=0,\;-1<x<1.
    \label{eq6.7}
     \end{equation} 
   Here we multiply this equation by $2^{2\alpha+4}(4\alpha+6)^{-1}\xi$, $\xi=\sqrt{(x+1)/2}$, and use
    \begin{equation*}
    2^{2\alpha+4}\Lambda_{2\alpha+4,n}^{\alpha,1/2}=
    \Lambda_{2\alpha+4,2n+1},\; 2^{2\alpha+2}(4\alpha+6)^{-1}
    C_{\alpha,1/2}=C_\alpha,\;
    4\Lambda_{2,n}^{\alpha,1/2}=\Lambda_{2,2n+1}^{\alpha,\alpha}-2(\alpha+1).       \end{equation*}
     Moreover, we find that
     \begin{equation*}
     \begin{aligned}
      4\xi\;L_{2,x}^{\alpha,1/2} y_n(x)&=4\xi\;\big\lbrace (x^2-1)D_x^2+\lbrack\alpha-1/2+(\alpha+5/2)x\rbrack \;D_x\big\rbrace y_n(x)\\
      &=\big\lbrace 4L_{2,x}^{\alpha,-1/2}+2(\alpha+1)\rbrace\lbrack\xi\;y_n(x)\rbrack=
      \big\lbrace L_{2,\xi}^{\alpha,\alpha}+2(\alpha+1)\rbrace v_n(\xi)
      \end{aligned}
      \end{equation*}
     and therefore
     \begin{equation*}
     4\xi\;\big\lbrace L_{2,x}^{\alpha,1/2}-\Lambda_{2,n}^{\alpha,1/2}\big\rbrace y_n(x)=
     \lbrace L_{2,\xi}^{\alpha,\alpha}-\Lambda_{2,2n+1}^{\alpha,\alpha}\rbrace v_n(\xi).
    \end{equation*} 
    Finally, we use identity (\ref{eq6.6}) again. With $m=\alpha+2$ and $\psi(\xi):=D_\xi\phi(\xi)$ we obtain
    \begin{equation*}
    \delta_\xi^{\alpha+2}\big\lbrace \xi^{2\alpha+5}\delta_\xi^{\alpha+2} \lbrack \xi^{-1}\psi(\xi) \rbrack \big\rbrace =
    \delta_\xi^{\alpha+2}\big\lbrace \xi^{2\alpha+5}\delta_\xi^{\alpha+3} \phi(\xi) \big\rbrace =D_\xi^{2\alpha+5}\phi(\xi)=D_\xi^{2\alpha+4}\psi(\xi).    
    \end{equation*}
   Hence,
   \begin{equation*}
   \begin{aligned}
   2^{2\alpha+4}\xi\;L_{2\alpha+4,x}^{\alpha,1/2} y_n(x)&=2^{2\alpha+4}
   \frac{\xi(x-1)}{(x+1)^{1/2}}D_x^{\alpha+2}\big\lbrace(x+1)^{\alpha+5/2}
   D_x^{\alpha+2}\lbrack (x-1)^{\alpha+1}y_n(x)\rbrack\big\rbrace\\
   &=(\xi^2-1)\delta_\xi^{\alpha+2}\big\lbrace \xi^{2\alpha+5}\delta_\xi^{\alpha+2}\lbrack \xi^{-1} (\xi^2-1)^{\alpha+1}\xi\;y_n(2\xi^2-1)\rbrack\big\rbrace\\
   &=(\xi^2-1)D_\xi^{2\alpha+4}\lbrack(\xi^2-1)^{\alpha+1}v_n(\xi)\rbrack\\ 
   &=L_{2\alpha+4,\xi}v_n(\xi).
    \end{aligned}
    \end{equation*}
   Combining all parts then yields equation (\ref{eq1.18}) applied to $v_n(\xi)=P_{2n+1}^{\alpha,\alpha,N,N}(\xi)$. Notice that by symmetry, the resulting equation for the ultrapherical-type polynomials, both of even and odd degree, can easily be extended to the full range $-1 \le \xi \le 1$.       
   \end{proof}  
\subsection*{Acknowledgement}
This paper is dedicated to the memory of the late Professor Ernst G$\ddot{o}$rlich. The author is greatly indebted to Professor G$\ddot{o}$rlich for his steady encouragement and support as a teacher and Ph.D. supervisor of the author as well as for the long lasting cooperation and friendship.

\vskip0.5cm
\begin{footnotesize}
\noindent
C. Markett, Lehrstuhl A f\"ur Mathematik, RWTH Aachen,
52056 Aachen, Germany;
\sPP
E-mail: {\tt markett@matha.rwth-aachen.de}

\end{footnotesize}

\end{document}